%
%
%
%
\documentclass[11pt]{amsart}

\usepackage[utf8]{inputenc}
\usepackage{amsmath,amsxtra,amssymb,latexsym,amscd,amsthm}

\theoremstyle{plain}

\usepackage{csquotes}

\usepackage{tikz-cd}
\usepackage{tkz-euclide}

\usepackage[colorlinks]{hyperref} 
\hypersetup{
  pdftitle   = {Equivariant Kuranishi family of complex compact manifolds},
  pdfauthor  = {An-Khuong Doan},
  pdfcreator = {},
  linkcolor  = {black},
  citecolor  = {black},
  urlcolor   = {black}
}

\usepackage{url}

\newtheorem{theorem}{Theorem}[section]

\theoremstyle{definition}

\theoremstyle{remark}

\numberwithin{equation}{section}



\theoremstyle{break}
\newtheorem{defi}{Definition}[section]
\newtheorem{thm}{Theorem}[section]
\newtheorem{prop}{Proposition}[section]
\newtheorem{lem}{Lemma}[section]
\newtheorem{coro}{Corollary}[section]

\theoremstyle{remark}
\newtheorem{rem}{Remark}[section]
\newtheorem*{ackn}{\textbf{Acknowledgements}}

\newtheorem*{thm*}{Theorem}

\newcommand{\Ho}{\operatorname{Hol}}
\newcommand{\Au}{\operatorname{Aut}}
\newcommand{\Lie}{\operatorname{Lie}}
\newcommand{\Diff}{\operatorname{Diff}}
\begin{document}

\title{Equivariant Kuranishi family of complex compact manifolds}

\author{}
\address{}
\email{ }
\thanks{ }

\author{ An-Khuong DOAN}
\address{An-Khuong DOAN, IMJ-PRG, UMR 7586, Sorbonne Université,  Case 247, 4 place Jussieu, 75252 Paris Cedex 05, France}
\email{an-khuong.doan@imj-prg.fr }
\thanks{ }

\subjclass[2010]{14D15, 14B10, 32G}

\date{December 12, 2020.}

\dedicatory{ }

\keywords{Deformation theory, Moduli theory, Equivariance structure}

\begin{abstract}
We prove that actions of complex reductive Lie groups on a complex compact manifold are locally extendable to its Kuranishi family. This can be seen as an analogue of Rim's result (see [12]) in the analytic setting.
\end{abstract}

\maketitle
\tableofcontents
\section{Introduction}

Let $X_0$ be an algebraic scheme equipped with an action of an algebraic group $G$, defined over a fixed algebraically closed field $k$. D. S. Rim posed a problem which asks whether we can provide a $G$-action on the formal semi-universal deformation of $X_0$, extending the given $G$-action. If $X_0$ is an affine cone with $\mathbb{G}_m$-action, Pinkham showed that the answer is affirmative in [11]. Later on, Rim generalized this result to the case that  $X_0$ is an affine scheme with at most isolated singularities or a complete algebraic variety over $k$ and that $G$ is a linearly reductive algebraic group (see [12]). For non-reductive groups, this is not the case, in general. A counter-example to this phenomenon can be found in [5], where $X_0$ is the second Hirzebruch surface $\mathbb{F}_2$ and $G$ is its automorphism group. In this paper, we would like to reproduce Rim's result when $X_0$ is a complex compact manifold on which a compact Lie group $G$ acts holomorphically and then try to address the case that $G$ is a complex reductive Lie group (see Corollary 4.1 and Theorem 5.2 below).

A remark should be in order. The main different point here is that in the algebraic setting, the semi-universal deformation of $X_0$ and the extended $G$-actions, constructed by Rim, are just formal. However, in the analytic setting, its semi-universal deformation (often called Kuranishi family) is a true deformation (a convergent deformation). So, an application of Rim's result gives us a $G$-equivariant Kuranishi family whose extended $G$-actions are only formal, i.e. they are formal power series whose convergence is not guaranteed. Actually, this way of using Rim's theorem keeps being repeated several times for example in the proof of Theorem 4.20 in [10] and in the proof of Proposition 7.1 in [7], where the convergence is supposedly needed to carry out. Moreover, an extension of the $G$-action on the Kuranishi space is immediate if the Kuranishi family is locally universal. This follows from the fact that each time we change the central fiber of the locally universal family by a biholomorphism of $X_0$, we obtain another locally universal family of $X_0$ which is canonically isomorphic to the old one. However, in the proof of Lemma 3.4 in [4], the author produced a $G$-action on the base by claiming that there exists a local universal deformation, which is not true in general even for the type of complex compact manifold considered therein. Thus, a very natural wish is to have a convergent $G$-extension. This is one of the motivations for us to write the paper.

Let us now outline the organization of this article. First, we give a general picture of deformations of complex compact manifolds in $\S2$. The most important result on the existence of semi-universal deformation (Kuranishi family) is also included. Next, we attack the problem by giving a useful existence criterion in $\S3$, which turns out to be deduced from an elementary lemma on complex structures of real vector spaces. In $\S4$, we treat the case that $G$ is compact, in advance. The key point here is that in place of imposing an arbitrary Hermitian metric on the holomorphic tangent bundle, we can impose a $G$-invariant one for the sake of the compactness of $G$. In fact, this idea is already contained in Catanese's lecture note (see [3, Lecture III, $\S$7]). However, the author uses it to treat only the case that the actions are required to be trivial on the base. If we take the set of fixed points by the $G$-action in the base constructed in our case then the restriction of our $G$-equivariant family on this set is nothing but Catanese's family. Finally, in $\S5$, we deal with the complex reductive case by means of complexification of compact groups.

 \begin{ackn} This is a part of my Ph.D thesis at Institut de mathém-atiques de Jussieu – Paris Rive Gauche (IMJ-PRG). I would like to profoundly thank my thesis advisor - Prof. Julien Grivaux for many precious discussions, for his enthusiastic instructions, his continuous support, his extremely careful reading and his comments, which help enormously to establish this work. I am warmly grateful to the referee whose work led to a considerable improvement of the paper.
 \end{ackn}
 \section{Deformations of complex compact manifolds}
We first recall some basic definitions in deformation theory of compact complex manifolds. Let $\mathfrak{B}$ be the category of germs of pointed complex space $(B,0)$ (a complex space with a reference point) whose associated reduced complex space is a point and let $X_0$ be a complex compact manifold. An infinitesimal deformation of $X_0$ is a deformation of $X_0$ over a germ of complex space $(B,0)\in \mathfrak{B}$, i.e. a commutative diagram
\begin{center}
\begin{tikzpicture}[every node/.style={midway}]
  \matrix[column sep={8em,between origins}, row sep={3em}] at (0,0) {
    \node(Y){$X_0$} ; & \node(X) {$X$}; \\
    \node(M) {$\cdot$}; & \node (N) {$(B,0)$};\\
  };
  
  \draw[->] (Y) -- (M) node[anchor=east]  {}  ;
  \draw[->] (Y) -- (X) node[anchor=south]  {$i$};
  \draw[->] (X) -- (N) node[anchor=west] {$\pi$};
  \draw[->] (M) -- (N) node[anchor=north] {};.
\end{tikzpicture}
\end{center}
where $\pi:\;X\rightarrow (B,0)$ is a flat proper morphism of complex spaces. For simplicity, we denote such a deformation by $\pi$: $X\rightarrow (B,0)$ (or sometimes just $X/B$). If  $\pi$: $X\rightarrow (B,0)$ and  $\pi'$: $X'\rightarrow (B',0)$ are two infinitesimal deformations of $X_0$, a morphism of infinitesimal deformations is a pair $(\Phi,\phi)$ of two morphisms of complex spaces $\Phi:\;X\rightarrow X'$ and $\phi:\;(B,0) \rightarrow (B',0)$  such that the following diagram commutes

\begin{center}
\begin{tikzpicture}[every node/.style={midway}]
  \matrix[column sep={4em,between origins}, row sep={1em}] at (0,0) {
  \node(A){} ; &\node(B){$X$} ; &  \node(C){} ; & \node(D) {$X'$}; \\
  \node(E){$X_0$} ; &\node(F){} ; &  \node(G){} ; & \node(H) {}; \\
  \node(I){} ; &\node(K){$(B,0)$} ; &  \node(L){} ; & \node(M) {$(B',0).$}; \\
  \node(N){$.$} ; &\node(O){} ; &  \node(P){} ; & \node(Q) {}; \\
  };
  
  \draw[->] (B) -- (D) node[anchor=south]  {$\Phi$}  ;
  \draw[->] (B) -- (K) node[anchor=west]  {$\pi$};
  \draw[->] (D) -- (M) node[anchor=west] {$\pi'$};
  \draw[->] (K) -- (M) node[anchor=south] {$\phi$};
  \draw[->] (E) -- (B) node[anchor=south] {$i$};
  \draw[->] (E) -- (D) node[anchor=north] {$i'$};
   \draw[->] (N) -- (K) node[anchor=south] {};
  \draw[->] (N) -- (M) node[anchor=north] {};
  \draw[->] (E) -- (N) node[anchor=north] {};.
\end{tikzpicture}
\end{center}

Kuranishi proves the existence of a semi-universal deformation $\pi$: $X\rightarrow (S,0)$, called Kuranishi family, which contains all the information of small deformations of $X_0$ (cf. [8] or [9]). Semi-universality here means that any other deformation $\rho$: $Y \rightarrow (T,0)$ of $X_0$ is defined by the pullback of the Kuranishi family under a holomorphic map from $(T,0)$ to $(S,0)$, whose differential at the reference point is unique.

Next, let us take a moment to recall the definition of group actions on complex spaces. For the sake of completeness, we recall first that a mapping $\alpha$ from a real analytic (resp. complex) manifold $W$ to a Fréchet space $F$ over $\mathbb{C}$ is called \textit{real analytic} (reps. \textit{holomorphic}) if for each point $w_0\in W$ there exists an open coordinate neighborhood $N_{w_0}$ and a real analytic (resp. holomorphic) coordinate system $t_1,\ldots,t_n $ in $N$ such that $t_i(w_0)=0$ and for all $w\in N$, we have
that $$\alpha(w)=\sum a_{i_1,\ldots,i_n}t_1^{i_1}(w)\ldots t_n^{i_n}(w) $$ where $a_{i_1,\ldots,i_n} \in F$ and the convergence is absolute with respect to any continuous semi-norm on $F$. Furthermore, by a $C^p$-map, we insinuate a $p$-times continuously differentiable function. Let $G$ be a real (resp. complex) Lie group and $X$ a complex space. A $G$-action on $X$ is given by a group homomorphism $\Phi:\; G \rightarrow \Au(X)$, where $\Au(X)$ is the group of biholomorphisms of $X$. 
\begin{defi} The $G$-action determined by $\Phi$ is said to be real analytic (resp. holomorphic) if for each open relatively compact $U \Subset X$ and for each open $V\subset X$, the following conditions are satisfied
\begin{enumerate}
\item[(i)]$W:=W_{\overline{U},V}:=\lbrace g\in G \mid g\cdot \overline{U}\subset V \rbrace $ is open in $G$,
\item[(ii)]the map \begin{align*}
*:W&\rightarrow \mathcal{O}(U)\\
g &\mapsto f\circ g\mid_U
\end{align*} is real analytic (resp. holomorphic) for all $f\in \mathcal{O}(V)$ ,
\end{enumerate}
where $\overline{U}$ is the closure of $U$ and $\mathcal{O}(P)$ is the set of holomorphic functions on $P$ for any open subset $P$ of $X$ ($\mathcal{O}(P)$ is equipped with the canonical Fréchet topology).
\end{defi}
To end this section, we introduce a very interesting kind of deformations-the kind of $G$-equivariant ones, which is of central interest of the article. As before, let $X_0$ be a complex compact manifold equipped with a real analytic (resp. holomorphic) $G$-action.
\begin{defi}
 A real analytic (resp. holomorphic) $G$-equivariant deformation of $X_0$ is a usual deformation of $X_0$ $\pi$: $X\rightarrow B$ equipped with a real analytic (resp. holomorphic) $G$-action on $X$ extending the given (resp. holomorphic) $G$-action on $X_0$ and a real analytic (resp. holomorphic) $G$-action on $B$ in a way that $\pi$ is a $G$-equivariant map with respect to these actions. We call these extended actions a real analytic (resp. holomorphic) $G$-equivariant structure on $\pi$: $X\rightarrow B$.
\end{defi}
 Therefore, we can rephrase our objective as finding a real analytic (resp. holomorphic) $G$-equivariant semi-universal deformation of a given compact complex manifold with a real analytic (resp. holomorphic) $G$-action.
\begin{rem}
For simplicity, by $G$-actions (resp. $G$-equivariant deformations), we really mean real analytic $G$-actions (resp. real analytic $G$-equivariant deformations).
\end{rem}

\section{A sufficient condition for the existence of equivariant structure} In this section, we give a criterion for a complex compact manifold $X_0$ with a $G$-action to have a $G$-equivariant semi-universal deformation. From now on, by complex compact manifold, we really mean a complex compact connected manifold.  First, we recall a technical result concerning the holomorphicity of real analytic functions defined on complex spaces (cf. [8, Proposition 2.1]).
\begin{prop} If $V$ is a complex space and $v$ is a point of $V$, there exists an integer $\alpha$ satisfying the following condition: If $f:\; V\rightarrow V'$ is a $C^\alpha$-map, where $V'$ is another complex space, such that $f$ is holomorphic at each non-singular point of $V$ then there is an open neighborhood $\overline{V}$ of $v$ in $V$ such that the restriction of $f$ on $\overline{V}$ is holomorphic.
\end{prop}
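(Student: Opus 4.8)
The plan is to reduce the assertion to a removable-singularity principle and to let the differentiability order $\alpha$ absorb the failure of $V$ to be normal at $v$. The statement being local at $v$, I first choose a local embedding $(V,v)\hookrightarrow(\Omega,0)$ with $\Omega\subset\mathbb{C}^N$ open, and likewise embed a neighbourhood of $f(v)$ in $V'$ into some $\mathbb{C}^M$. Then $f=(f_1,\dots,f_M)$, each $f_j$ being a $C^\alpha$-function on $V$ (that is, the restriction to $V$ of a $C^\alpha$-function on $\Omega$), and ``$f$ holomorphic'' means $f_j\in\mathcal{O}_{V}$ for all $j$. Everything thus reduces to the scalar claim: there is an integer $\alpha=\alpha(V,v)$ such that any $C^\alpha$-function $g$ on $V$ holomorphic on the regular locus $V_{\mathrm{reg}}$ is holomorphic near $v$. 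I may assume $V$ reduced (the nilradical being handled separately) and recall that $V_{\mathrm{reg}}$ is open and dense while the singular locus $V_{\mathrm{sing}}$ is a proper analytic subset.

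Next I pass to the normalization $\nu:\widetilde{V}\to V$, which is biholomorphic over $V_{\mathrm{reg}}$. Since $g$ is continuous, it is locally bounded, and $g\circ\nu$ is holomorphic on the dense open set $\nu^{-1}(V_{\mathrm{reg}})$, whose complement is a thin analytic subset of the \emph{normal} space $\widetilde{V}$. Riemann's extension theorem on normal complex spaces then yields that $g\circ\nu$ is holomorphic on all of $\widetilde{V}$; equivalently $g$ is a weakly holomorphic function, i.e. a section of $\nu_{*}\mathcal{O}_{\widetilde{V}}$. Note that only the continuity of $g$ has been used so far.

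The decisive step is the descent from $\nu_{*}\mathcal{O}_{\widetilde{V}}$ back to $\mathcal{O}_{V}$, where the hypothesis on $\alpha$ finally enters. In general $\mathcal{O}_{V,v}\subsetneq(\nu_{*}\mathcal{O}_{\widetilde{V}})_{v}$, the standard obstruction being $x\mapsto x^{1/2}$ on the cuspidal cubic $y^{2}=x^{3}$, which is weakly holomorphic but only Hölder, hence not $C^{1}$. Since the quotient $(\nu_{*}\mathcal{O}_{\widetilde{V}})/\mathcal{O}_{V}$ is a coherent sheaf supported on $V_{\mathrm{sing}}$ and annihilated by the conductor of $\mathcal{O}_{V}$ in its normalization, every weakly holomorphic germ that is not already in $\mathcal{O}_{V,v}$ displays a \emph{bounded} order of fractional, non-smooth behaviour. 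Quantifying this by a {\L}ojasiewicz-type inequality for the components $z_i\circ\nu$ of the normalization map, I would fix $\alpha=\alpha(V,v)$ larger than this order; then a weakly holomorphic function whose restriction to $V$ is of class $C^{\alpha}$ cannot carry any such fractional term and must lie in $\mathcal{O}_{V,v}$. Applying this to each $f_j$ and shrinking to a common neighbourhood on which they are simultaneously holomorphic completes the proof.

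The main obstacle is precisely this last descent: the localization and the normalization step are soft, but making the threshold explicit --- extracting a single integer $\alpha$, depending only on the germ $(V,v)$ and independent of $f$ and of the target $V'$, that upgrades weak holomorphicity to holomorphicity --- is the quantitative heart of the statement.
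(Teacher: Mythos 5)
The first thing to note is that the paper contains no proof of this proposition to compare against: it is quoted verbatim as a technical input from Kuranishi's lecture notes [8, Proposition 2.1], and in substance it is Spallek's theorem on differentiable functions on complex spaces. So your attempt has to stand on its own. Its first two stages are sound, at least for reduced $V$: localizing into $\mathbb{C}^N$ and reducing to scalar functions is standard, and the normalization step is correct --- a continuous function $g$ on $V$ holomorphic on $V_{\mathrm{reg}}$ pulls back to a function on the normalization $\widetilde{V}$ that is continuous and holomorphic off the nowhere dense analytic set $\nu^{-1}(V_{\mathrm{sing}})$, hence holomorphic by Riemann extension on normal spaces, so $g$ is weakly holomorphic. (Two caveats: the parenthetical ``the nilradical being handled separately'' is not innocuous, since on a non-reduced space neither ``$C^\alpha$-map'' nor ``holomorphic at non-singular points'' has the meaning you use; and, as you note, only continuity has been used up to this point, which already signals that the theorem's content lies entirely ahead.)

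The genuine gap is the step you yourself flag as the ``quantitative heart'': the descent from $\nu_*\mathcal{O}_{\widetilde{V}}$ to $\mathcal{O}_V$. What you offer there is not an argument but a restatement of the theorem. That every weakly holomorphic germ outside $\mathcal{O}_{V,v}$ exhibits a ``bounded order of fractional, non-smooth behaviour'', and that a {\L}ojasiewicz exponent for $\nu$ produces a \emph{single} integer $\alpha$ valid for all such germs at once, is exactly what must be proved; neither follows formally from coherence of the conductor. Concretely: (i) $\nu_*\mathcal{O}_{\widetilde{V}}/\mathcal{O}_V$ is supported on $V_{\mathrm{sing}}$ but is in general not finite-dimensional over $\mathbb{C}$, so one cannot bound germ by germ and take a maximum; (ii) the conclusion is holomorphy on a \emph{neighborhood} of $v$, so the threshold must be uniform over all singular points near $v$, not just at $v$; (iii) ``$g$ is $C^\alpha$ on $V$'' only means $g$ extends to a $C^\alpha$ function of the ambient variables, and converting that into control of the expansion of $g\circ\nu$ is precisely where the work lies. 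The curves $y^2=x^{2m+1}$ make the difficulty visible: the weakly holomorphic function $y/x$ (equal to $t^{2m-1}$ on the normalization $t\mapsto(t^2,t^{2m+1})$) is of class $C^{m-1}$ but not holomorphic, so the required $\alpha$ genuinely grows with the singularity and no soft or purely formal argument can produce it. Filling this in amounts to reproving Spallek's theorem; as it stands, your proposal establishes only the comparatively easy implication that continuity plus holomorphy on $V_{\mathrm{reg}}$ gives weak holomorphy.
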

Denote by $\Diff(\underline{X_0})$ the group of diffeomorphisms of $\underline{X_0}$ where  $\underline{X_0}$ is the underlying differentiable manifold of $X_0$. For $S$ a complex space, a map $\gamma: \; S \rightarrow \Diff(\underline{X_0})$ is said to be of class $C^k$ when the map  \begin{align*}
\Gamma :\underline{X_0}\times S &\rightarrow \underline{X_0}\\
(p,s) &\mapsto \gamma(s)(p)
\end{align*} is of class $C^k$. If this is indeed the case, then for each $s_0\in S$ the map
\begin{align*}
\Gamma_{s_0} :\underline{X_0}\times S &\rightarrow \underline{X_0}\\
(p,s) &\mapsto \gamma(s)\circ (\gamma(s_0))^{-1}(p)
\end{align*} is a $C^k$-family of deformations of the identity map of $\underline{X_0}$ with a parameter in $(S,s_0)$. In particular, for each $p \in \underline{X_0}$, we obtain a $C^k$-map
\begin{align*}
\Gamma_{s_0,p} : S &\rightarrow \underline{X_0}\\
s &\mapsto \gamma(s)\circ (\gamma(s_0))^{-1}(p).
\end{align*} Therefore, if we suppose further that $s_0$ is a non-singular point then each $L \in T_{s_0}^{\text{Zar}}S$ will give rise to a vector $d(\Gamma_{s_0,p})_{s_0}(L)$, in $T_p{\underline{X_0}}$, where  $d(\Gamma_{s_0,p})_{s_0}$ is the differential of $\Gamma_{s_0,p}$ at $s_0$. Thus, the map 
\begin{align*}
\underline{X_0} &\rightarrow T\underline{X_0}\\
p &\mapsto d(\Gamma_{s_0,p})_{s_0}(L)
\end{align*} defines a $C^k$-vector field on $\underline{X_0}$, which we shall denote by $L\sharp^{s_0} \gamma $.

Finally, before 
stating the main result, given a complex compact manifold $X_0$, let us bring back a celebrated characterization of its deformations and in particular of its semi-universal deformation (see [8, Theorem 8.1]).

\begin{theorem}

A deformation of $X_0$ is entirely encoded by a real analytic map $\phi: S \rightarrow A^{0,1}(\Theta)$ which varies holomorphically in $S$ such that 
\begin{enumerate}
\item[$(i)$] $\phi(0)=0$,
\item[$(ii)$] $\overline{\partial}\phi(s)-\frac{1}{2}[\phi(s),\phi(s)]=0$ for all $s \in S$,
\end{enumerate}
where $A^{0,1}(\Theta)$ is the space of $(0,1)$-forms with values in the holomorphic tangent bundle $\Theta$ of $X_0$ and  $S$ is a complex space with a reference point $0$. Moreover, this deformation is semi-universal if and only if 
\begin{enumerate}
\item[$(iii)$] The Kodaira-Spencer map induced by $\phi$ is an isomorphism,

\item[$(iv)$] We can find an open neighborhood $S'$ of $0$ in $S$ such that the following conditions hold true: for any complex space $B$ and for any real analytic map $\psi:\;  B \rightarrow  A^{0,1}(\Theta)$, which varies holomorphically in $B$, such that $\psi(b_1)=\phi(s_1)$ for a point $(b_1,s_1)\in B\times S'$, we can find a neighborhood $B'$ of $b_1$, a holomorphic map $\tau: \; (B',b_1)\rightarrow (S',s_1)$ and a $C^{\alpha}$-map $\gamma:\; B' \rightarrow \Diff(\underline{X_0})$ such that 
\begin{enumerate}
\item[$(a)$] $\phi(\tau(b))=\psi(b)\circ \gamma(b)$ for all $b\in B'$. Here, $\psi(b)\circ \gamma(b)$ is the complex structure induced by the complex structure $\psi(b)$ and the diffeomorphism $\gamma(b)$,
\item[$(b)$] For each regular point $b \in B'$ and for all $L \in T_b^{0,1} B \subset T_b^{\text{Zar}}B=  T_b^{1,0} B \oplus T_b^{0,1} B$, we have that $L\sharp^b \gamma^{-1} +\phi(\tau(b))\circ \overline{\overline{L}\sharp^b(\gamma^{-1})} =0$ where $\alpha$ is the integer in Proposition 3.1 for $(  \mathbb{C}^{ \dim_{\mathbb{C} } X_0} \times B, 0\times b_1   )$ and $\gamma^{-1}$ is the map $B' \rightarrow \Diff(\underline{X_0})$ which to $b \in B'$, associates $(\gamma(b))^{-1}$.
\end{enumerate}
\end{enumerate}

\end{theorem}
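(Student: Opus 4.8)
My plan is to set up the standard dictionary between deformations of $X_0$ and integrable Beltrami differentials, and then to read off semi-universality as the pair of conditions $(iii)$ and $(iv)$. First I would produce the map $\phi$ from a given deformation $\pi:\;X\rightarrow(S,0)$: after shrinking $S$, Ehresmann's fibration theorem trivializes the family $C^\infty$-differentiably as $\underline{X_0}\times S$, so that the complex structure of each fibre $X_s$ transports to an almost complex structure on $\underline{X_0}$ whose deviation from the reference structure is recorded by a form $\phi(s)\in A^{0,1}(\Theta)$ with $\phi(0)=0$, which is $(i)$. Holomorphicity of the total space forces $\phi$ to vary holomorphically in $s$, and the construction is real analytic in the fibre directions. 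The content of $(ii)$ is precisely the integrability of the almost complex structure attached to $\phi(s)$: expanding the Newlander--Nirenberg condition for the operator $\overline{\partial}-\phi(s)$ produces the Maurer--Cartan equation $\overline{\partial}\phi(s)-\frac{1}{2}[\phi(s),\phi(s)]=0$. Running this in reverse gives the converse, since a $\phi$ satisfying $(i)$ and $(ii)$ equips $\underline{X_0}\times S$ with a fibrewise integrable almost complex structure which Newlander--Nirenberg promotes to a genuine holomorphic family over $(S,0)$.

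Next I would match semi-universality against $(iii)$ and $(iv)$. By the definition recalled in $\S2$, semi-universality means that every deformation is induced from $\phi$ by a base-change whose differential at the reference point is unique. The Kodaira--Spencer map is the composite of $d\phi_0$ with the projection onto $H^{0,1}(\Theta)\cong H^1(X_0,\Theta)$; since the differential of any inducing map $\tau$ intertwines the Kodaira--Spencer maps of $\psi$ and $\phi$, its injectivity is equivalent to the uniqueness of $d\tau_0$ while its surjectivity is forced by completeness, which together give $(iii)$. The completeness statement is then the substance of $(iv)$: given another family $\psi:\;B\rightarrow A^{0,1}(\Theta)$ with $\psi(b_1)=\phi(s_1)$, one exhibits the base-change $\tau:\;(B',b_1)\rightarrow(S',s_1)$ and the family of diffeomorphisms $\gamma:\;B'\rightarrow\Diff(\underline{X_0})$ identifying the pulled-back structure with $\psi$, namely $\phi(\tau(b))=\psi(b)\circ\gamma(b)$, which is $(a)$.

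The delicate point, and where I expect the real work to lie, is condition $(iv)(b)$. Differentiating the relation $\phi(\tau(b))=\psi(b)\circ\gamma(b)$ along the anti-holomorphic directions $L\in T_b^{0,1}B$ should produce the identity $L\sharp^b\gamma^{-1}+\phi(\tau(b))\circ\overline{\overline{L}\sharp^b(\gamma^{-1})}=0$, which is exactly the assertion that each $\gamma(b)$ intertwines the structures $\psi(b)$ and $\phi(\tau(b))$ compatibly with the complex directions, and hence that the a priori merely smooth $\tau$ is holomorphic. This regularity upgrade from the $C^\alpha$-map $\gamma$ to a holomorphic $\tau$ is precisely where Proposition 3.1 is invoked, applied to $\mathbb{C}^{\dim_{\mathbb{C}} X_0}\times B$ at the point $0\times b_1$. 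For the converse implication I would argue that $(iii)$ furnishes the uniqueness of $d\tau_0$ and that $(iv)$ furnishes completeness, every deformation being induced from $\phi$ through such a $\tau$, and that these two properties together constitute semi-universality, closing the equivalence.
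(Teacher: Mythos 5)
First, a structural point: the paper never proves this statement. It is imported verbatim from Kuranishi ([8, Theorem 8.1]), introduced with ``let us bring back a celebrated characterization,'' and used as a black box. So there is no internal proof to compare yours against; your attempt has to stand on its own as a proof of Kuranishi's theorem, and judged that way it has genuine gaps. The first concerns the base. Your dictionary (Ehresmann trivialization, then fibrewise Newlander--Nirenberg, then holomorphic dependence on $s$) is the correct and standard one when $S$ is a \emph{manifold}, but the theorem is stated for an arbitrary complex space $(S,0)$, possibly singular. Over a singular base Ehresmann's theorem does not apply; the real analytic trivialization needs a separate argument (this is exactly what the paper later invokes [3, Theorem 4.5] for), the phrase ``varies holomorphically in $S$'' has to be formulated without a smooth structure on $S$, and it is precisely because of singular points that condition $(iv)(b)$ is stated only at \emph{regular} points $b$ and that the integer $\alpha$ of Proposition 3.1 enters: one needs $C^\alpha$-regularity plus holomorphicity at regular points to push holomorphicity of the induced total-space map across the singular locus. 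None of this is addressed in your sketch.

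The second gap is an outright error: condition $(b)$ is \emph{not} obtained by differentiating $(a)$, and it is not a regularity upgrade for $\tau$, which is holomorphic by hypothesis. Condition $(a)$ constrains each diffeomorphism $\gamma(b)$ individually, saying that $\gamma(b)$ pulls the structure $\psi(b)$ back to $\phi(\tau(b))$; it is therefore unchanged if one replaces $\gamma(b)$ by $a_b\circ\gamma(b)$, where $b\mapsto a_b$ is an \emph{arbitrary} family of automorphisms of $(\underline{X_0},\psi(b))$. Choosing $a_b$ to vary non-holomorphically (or even just discontinuously) in $b$ --- possible whenever the fibres have positive-dimensional automorphism group, e.g. tori --- preserves $(a)$ but destroys $(b)$. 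Thus $(b)$ is an independent condition: it is the Cauchy--Riemann equation in the base directions for the map of total spaces determined by $\gamma$, i.e. exactly the extra requirement that promotes the pair $(\tau,\gamma)$ from a family of fibrewise biholomorphisms to an actual morphism of deformations. Since the equivalence ``semi-universal $\Leftrightarrow (iii)+(iv)$'' rests entirely on the fact that pairs satisfying $(a)$ and $(b)$ correspond to morphisms of families, treating $(b)$ as a formal consequence of $(a)$ erases the main content of the proof. Finally, note that $(iv)$ demands completeness at \emph{every} point $s_1$ in a neighborhood $S'$ of $0$, not only at the reference point; the ``only if'' direction therefore requires an openness-of-completeness argument, which your proposal also omits.
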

Now, coming back to our case where the group action joins the game, we claim the following.
\begin{theorem}
If the map $\phi$ can also be made $G$-equivariant with respect to some $G$-action on $S$ and the $G$-action on $A^{0,1}(\Theta)$, induced by the one on $X_0$, then a $G$-equivariant semi-universal deformation of $X_0$ exists.
\end{theorem}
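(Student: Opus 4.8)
The plan is to transport the given $G$-actions on $\underline{X_0}$ and on $S$ directly onto the concrete model of the Kuranishi family furnished by Theorem 3.2, and then to promote the resulting fiberwise-holomorphic map to a genuine biholomorphism of the total space by means of Proposition 3.1. First I would recall the explicit model underlying Theorem 3.2: the total space may be taken to be the smooth manifold $\underline{X_0}\times S$, where the fiber $\underline{X_0}\times\{s\}$ carries the complex structure induced by the element $\phi(s)\in A^{0,1}(\Theta)$, these structures varying holomorphically in $s$ so that the projection $\pi\colon X\rightarrow S$ is a flat proper holomorphic map. On this model I would define the candidate action by $\Phi(g)(p,s)=(g\cdot p,\,g\cdot s)$, where $g\cdot p$ is the given $G$-action on $\underline{X_0}$ and $g\cdot s$ is the $G$-action on $S$ for which $\phi$ is equivariant. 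By construction $\pi(\Phi(g)(p,s))=g\cdot s=g\cdot\pi(p,s)$, so $\pi$ is $G$-equivariant; and since $\phi(g\cdot 0)=g\cdot\phi(0)=0=\phi(0)$ while $0$ is, by condition $(iii)$, the only nearby point with vanishing $\phi$, the reference point $0\in S$ is fixed, whence $\Phi(g)$ restricts on the central fiber to the original action of $g$ on $X_0$.

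The heart of the argument is to show that each $\Phi(g)$ is a biholomorphism of the complex space $X$. The key observation is that the $G$-action on $A^{0,1}(\Theta)$ induced from $X_0$ is precisely the natural action of diffeomorphisms on deformation tensors, so that the diffeomorphism $g$ of $\underline{X_0}$ carries the complex structure induced by $\phi(s)$ to the one induced by $g\cdot\phi(s)$; this is the naturality of the Kodaira--Spencer correspondence, using that $g$ is a biholomorphism for the base structure $\phi=0$. Combining this with the equivariance $\phi(g\cdot s)=g\cdot\phi(s)$ shows that $g$ maps the fiber $X_s$ biholomorphically onto $X_{g\cdot s}$; equivalently, $\Phi(g)$ intertwines the two $\overline{\partial}$-operators and hence preserves holomorphic functions fiberwise. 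Since moreover $s\mapsto g\cdot s$ is holomorphic on $S$ and $\phi$ varies holomorphically, $\Phi(g)$ is holomorphic at every non-singular point of $X$; being a composition of real analytic maps it is of class $C^\alpha$, so Proposition 3.1 upgrades it to a genuinely holomorphic self-map. Applying the same reasoning to $\Phi(g^{-1})=\Phi(g)^{-1}$ shows that $\Phi(g)$ is biholomorphic.

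Finally I would check the regularity of $g\mapsto\Phi(g)$ required by Definition 2.1: the real analytic (resp. holomorphic) dependence on $g$ is inherited from the given actions, since the joint map $(p,s,g)\mapsto(g\cdot p,g\cdot s)$ is real analytic (resp. holomorphic), and the homomorphism property $\Phi(gh)=\Phi(g)\Phi(h)$ is immediate from those of the two factor actions; together with the analogous statement for the action on $S$ this produces the desired real analytic (resp. holomorphic) $G$-equivariant structure. The main obstacle is exactly the passage from fiberwise to global holomorphicity in the second step: a priori $\Phi(g)$ is holomorphic only along the fibers and along the base separately, and it is the transformation law $\phi(g\cdot s)=g\cdot\phi(s)$, together with Proposition 3.1 to absorb the possibly singular points of $S$, that forces holomorphicity on the whole total space.
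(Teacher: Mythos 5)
Your proposal is correct and follows essentially the same route as the paper: both transport the diagonal action $g\cdot(x,s)=(g\cdot x,\,g\cdot s)$ onto the model $\underline{X_0}\times S$ and use the equivariance $\phi(g\cdot s)=g\cdot\phi(s)$, read as the transformation law $dg\,\phi(s)\,dg^{-1}=\phi(g\cdot s)$, to conclude that each $g$ maps the fiber over $s$ biholomorphically onto the fiber over $g\cdot s$, hence acts by biholomorphisms on the total space. The only difference is one of bookkeeping: the ``naturality of the Kodaira--Spencer correspondence'' you invoke is exactly what the paper proves by hand in its Lemma 3.1, and your explicit use of Proposition 3.1 to pass from holomorphicity at non-singular points to global holomorphicity (together with the check that the reference point $0$ is fixed) is, if anything, slightly more careful than the paper's own write-up.
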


 In order to prove this, let us introduce a lemma on complex structures of real vector spaces. Let $V$ be a real vector space of even dimension imposed with three different complex structures $J, J_m, J_n$  and $V^{\mathbb{C}}$ be its complexification then we have three complex vector spaces $(V,J),(V,J_m),(V,J_n)$ and  decompositions  $$V^{\mathbb{C}}=V_J^{1,0}\oplus V_J^{0,1},V^{\mathbb{C}}=V_{J_m}^{1,0}\oplus V_{J_m}^{0,1}, \text{ and }V^{\mathbb{C}}=V_{J_n}^{1,0}\oplus V_{J_n}^{0,1} $$ where $V_.^{1,0}$ and $V_.^{0,1}$ are eigenspaces attached to the eigenvalues $i$ and $-i$, respectively. Let $\pi^{1,0}:$ $V^{\mathbb{C}} \rightarrow V_J^{1,0}$ and  $\pi^{0,1}:$ $V^{\mathbb{C}} \rightarrow V_J^{0,1}$ be the canonical projections.

Now, suppose that the restrictions of $\pi^{0,1}$ on $V_{J_m}^{0,1}$ and on  $V_{J_n}^{0,1}$ are isomorphisms. Define $m,n: V_J^{0,1} \rightarrow V_J^{1,0}$ by
$m=\pi^{1,0}\circ(\pi^{0,1}\mid_{V_{J_m}^{0,1}})^{-1}$ and $n=\pi^{1,0}\circ(\pi^{0,1}\mid_{V_{J_n}^{0,1}})^{-1}$.  It is well-known that $$V_{J_m}^{0,1}=\left \{ u+m(u)\mid u\in V_{J}^{0,1} \right \} \text{ and }V_{J_n}^{0,1}=\left \{ u+n(u)\mid u\in V_{J}^{0,1} \right \}.$$

\begin{lem}
Let $\varphi$: $V \rightarrow V$ be an $\mathbb{R}$-linear map such that its complexification $\varphi^{\mathbb{C}}$  is a $\mathbb{C}$-linear map from $(V,J)$ to $(V,J)$. Then $\varphi$ is $\mathbb{C}$-linear as a map from $(V,J_m)  $ to $ (V,J_n)$ if $\varphi^{\mathbb{C}}\circ m=n\circ \varphi^{\mathbb{C}}$.
\end{lem}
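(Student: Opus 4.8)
The plan is to convert every instance of ``$\mathbb{C}$-linearity'' into a statement about how $\varphi^{\mathbb{C}}$ interacts with the eigenspace decompositions, reduce the goal to a single inclusion, and then verify that inclusion directly from the graph descriptions of $V_{J_m}^{0,1}$ and $V_{J_n}^{0,1}$ recalled just before the statement. First I would record the three reformulations. Since $J_m$ acts as $+i$ on $V_{J_m}^{1,0}$ and as $-i$ on $V_{J_m}^{0,1}$, and likewise for $J_n$, complexifying the real identity $\varphi\circ J_m=J_n\circ\varphi$ shows that $\varphi$ is $\mathbb{C}$-linear from $(V,J_m)$ to $(V,J_n)$ if and only if $\varphi^{\mathbb{C}}(V_{J_m}^{1,0})\subset V_{J_n}^{1,0}$ and $\varphi^{\mathbb{C}}(V_{J_m}^{0,1})\subset V_{J_n}^{0,1}$. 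Because $\varphi^{\mathbb{C}}$ is the complexification of a real map it commutes with complex conjugation, and since $V_{J_m}^{1,0}=\overline{V_{J_m}^{0,1}}$ and $V_{J_n}^{1,0}=\overline{V_{J_n}^{0,1}}$, the first inclusion follows from the second by conjugating. Hence it suffices to prove $\varphi^{\mathbb{C}}(V_{J_m}^{0,1})\subset V_{J_n}^{0,1}$. In the same way, the hypothesis that $\varphi^{\mathbb{C}}$ is $J$-linear says exactly that $\varphi^{\mathbb{C}}$ preserves each of $V_J^{1,0}$ and $V_J^{0,1}$.

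Next I would take an arbitrary $w\in V_{J_m}^{0,1}$ and write it, using the graph description, as $w=u+m(u)$ with $u\in V_J^{0,1}$. Applying $\varphi^{\mathbb{C}}$ and invoking its $J$-linearity, one has $\varphi^{\mathbb{C}}(u)\in V_J^{0,1}$ while $\varphi^{\mathbb{C}}(m(u))\in V_J^{1,0}$, so
$$\varphi^{\mathbb{C}}(w)=\varphi^{\mathbb{C}}(u)+\varphi^{\mathbb{C}}(m(u))$$
is already split according to the $J$-decomposition, with $(0,1)$-part $\varphi^{\mathbb{C}}(u)$ and $(1,0)$-part $\varphi^{\mathbb{C}}(m(u))$. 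Setting $u':=\varphi^{\mathbb{C}}(u)\in V_J^{0,1}$, the membership $\varphi^{\mathbb{C}}(w)\in V_{J_n}^{0,1}$ amounts, by the graph description for $J_n$, to the single identity $\varphi^{\mathbb{C}}(m(u))=n(u')=n(\varphi^{\mathbb{C}}(u))$. This is precisely the hypothesis $\varphi^{\mathbb{C}}\circ m=n\circ\varphi^{\mathbb{C}}$ evaluated at $u$, so the inclusion holds for every $w$ and the lemma follows.

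The computation is elementary, and I expect no genuine obstacle; the only point requiring care is the bookkeeping around the two domains. I would first check that the relation $\varphi^{\mathbb{C}}\circ m=n\circ\varphi^{\mathbb{C}}$ is even well-typed: both $m$ and $n$ go from $V_J^{0,1}$ to $V_J^{1,0}$, and the two composites make sense as maps $V_J^{0,1}\to V_J^{1,0}$ only because $\varphi^{\mathbb{C}}$, being $J$-linear, carries $V_J^{0,1}$ into itself and $V_J^{1,0}$ into itself. This is the same fact that produces the clean splitting of $\varphi^{\mathbb{C}}(w)$ above, so the $J$-linearity hypothesis is used twice and must be invoked at both places. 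Beyond this, the statement is essentially the observation that a $J$-linear endomorphism intertwining the graph-defining maps $m$ and $n$ automatically sends the graph of $m$ to the graph of $n$.
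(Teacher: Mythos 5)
Your proof is correct and follows essentially the same route as the paper's: both arguments reduce the lemma to the single inclusion $\varphi^{\mathbb{C}}(V_{J_m}^{0,1})\subseteq V_{J_n}^{0,1}$, establish it by applying $\varphi^{\mathbb{C}}$ to the graph description $w=u+m(u)$ and invoking $J$-linearity together with the hypothesis $\varphi^{\mathbb{C}}\circ m=n\circ\varphi^{\mathbb{C}}$, and then handle the $(1,0)$ eigenspace by complex conjugation. The only difference is organizational: you state the equivalence between $\mathbb{C}$-linearity and preservation of the eigenspace decomposition up front, whereas the paper proves the inclusion first and then verifies the intertwining identity $J_n\varphi^{\mathbb{C}}=\varphi^{\mathbb{C}}J_m$ on eigenvectors directly.
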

\begin{proof}
We claim that $\varphi^{\mathbb{C}}(V_{J_m}^{0,1})\subseteq V_{J_n}^{0,1}$. Indeed, let $v\in V_{J_m}^{0,1} $ then $v= u+m(u)$ for some $u\in V_{J}^{0,1}$. So, \begin{align*}
\varphi^{\mathbb{C}}(v) &= \varphi^{\mathbb{C}}( u+m(u))\\ 
 &=\varphi^{\mathbb{C}}( u)+\varphi^{\mathbb{C}}\circ m(u)\\ 
 &= \varphi^{\mathbb{C}}( u)+n\circ \varphi^{\mathbb{C}}(u).
\end{align*}
Moreover, since $\varphi^{\mathbb{C}}$ is a $\mathbb{C}$-linear map from $(V,J)$ to $(V,J)$ then \begin{align*}
J\varphi^{\mathbb{C}}(u) &=\varphi^{\mathbb{C}} J(u) \\ 
 &= \varphi^{\mathbb{C}}(iu) \text{ since }u\in V_{J}^{0,1}\\ 
 &= i\varphi^{\mathbb{C}}(u),
\end{align*} which implies that $\varphi^{\mathbb{C}}(v) \in V_{J}^{0,1} $. Hence, $\varphi^{\mathbb{C}}( u)+n\circ \varphi(u^{\mathbb{C}}) \in V_{J_n}^{0,1}$ then so is $\varphi(v)$, which proves the claim.

Now, let $v \in V_{J_m}^{0,1}$, then \begin{align*}
J_n\varphi^{\mathbb{C}}(v) &=-i\varphi^{\mathbb{C}}(v)\text{ by the claim} ,\\ 
 &= \varphi^{\mathbb{C}}(-iv)\\ 
 &= \varphi^{\mathbb{C}}J_m(v).
\end{align*}
Making use of the linear complex conjugation, we also get that $$J_n\varphi^{\mathbb{C}}(v)=\varphi^{\mathbb{C}}J_m(v)$$ for all $v \in V_{J_m}^{1,0}$. This ends the proof.
\end{proof}

Finally, it is the time for us to prove Theorem 3.2.
\begin{proof}[Proof of Theorem 3.2] First of all, by the discussion at the very beginning of this section, we have a semi-universal deformation $\pi: X \rightarrow S$ of $X_0$, associated to $\phi$. Let $\underline{X_0}$ be the underlying differentiable manifold of $X_0$. By [3, Theorem 4.5] after shrinking $S$ if necessary, there exists a real analytic diffeomorphism $\gamma: \; \underline{X_0}\times S \rightarrow X$ with $\pi\circ \gamma$ being the projection on the second factor of $\underline{X_0}\times S $, and such that $\gamma$ is holomorphic in the second set of variables. Thus, for a point $(x,s) \in \underline{X_0}\times S$, we have a decomposition of  the tangent space 
$$T_xX\oplus T_s^{\text{Zar}}S \cong T_{\gamma(x,s)}^{\text{Zar}}X   .$$
We claim that $\pi:$ $X \rightarrow S$ carries a $G$-equivariant structure. Indeed, for $g\in G$ and $(x,s) \in \underline{X_0}\times S $, define $$g.(x,s)=(g.x,g.s)$$ in which we think of $g$ as just a diffeomorphism of $\underline{X_0}$. This gives clearly an action of $G$ on $\underline{X_0}\times S $. We shall prove that in fact if we think of $X$ as $\underline{X_0}\times S$ with the complex structure $\phi(-)$ then $G$ acts on $X$ by biholomorphisms. This is equivalent to showing that the differential of $g$ at the point $(x,s)$ 
$$dg_{(x,s)}: \;T_{(x,s)}^{\text{Zar}}X=T_x\underline{X_0}\oplus T_s^{\text{Zar}} \rightarrow  T_{g.(x,s)}^{\text{Zar}}X=T_{g.x}\underline{X_0}\oplus T_{gs}^{\text{Zar}} $$ is $\mathbb{C}$-linear with respect to the complex structure induced by $\phi$ on the tangent space $T_{(x,s)}^{\text{Zar}}X$. Since $dg_{(x,s)}=(dg_x,dg_s)$ is a diagonal map and $g$ 
acts holomorphically on $S$. Then it is sufficient to check that $$dg_x:\;(T_x\underline{X_0},J_{(x,s)}) \rightarrow (T_{gx}\underline{X_0},J_{(gx,gs)})$$ is $\mathbb{C}$-complex linear where $J_{(x,s)}$ and $J_{(gx,gs)}$ are complex structures induced by maps $\phi(s)_x: $ $T_x^{0,1}\underline{X_0} \rightarrow T_x^{1,0}\underline{X_0}  $ and $\phi(gs)_{gx}:$ $T_{gx}^{0,1}\underline{X_0} \rightarrow T_{gx}^{1,0}\underline{X_0}$, respectively.
On the other hand, as $\phi$ is $G$-equivariant then we have
$$g\phi(s)=\phi(gs)$$ for any $s \in S$. This is equivalent to 
$$dg\phi(s)dg^{-1}=\phi(gs),$$ by definition of the action of a diffeomorphism $g$ on a complex structure $\phi(s)$. Thus, for each $x\in \underline{X_0}$, $$dg_x\phi(s)_x=\phi(gs)_{gx}dg_x.$$
Making use of Lemma 3.1 for $m=\phi(s)$, $n=\phi(gs)$ and $\varphi=dg_x$, we deduce that $dg_x$ is $\mathbb{C}$-complex linear so that $g$ is in fact holomorphic. Thus, we have just extended the $G$-action on the central fiber $X_0$ to a $G$-action the total space $X$. This action together with the given $G$-action on $S$ makes $\pi$ $G$-equivariant, which completes the proof. 
 \end{proof}

\section{The case that $G$ is a compact Lie group}
We treat compact group actions first. Let $X_0$ be an $n$-dimensional complex compact manifold equipped with a real analytic $K$-action, where $K$ is a compact real Lie group. The main result of this section is the following.
\begin{thm}
There exists a complex space $(S,0)$ and a real analytic map $\phi: (S,0) \rightarrow A^{0,1}(\Theta)$ which varies holomorphically in $S$ such that the conditions $(i),(ii),(iii)$ and $(iv)$, listed in Theorem 3.1, are fulfilled. Furthermore,  $\phi$ is $K$-equivariant with respect to some $K$-action on $S$ and the $K$-action on $A^{0,1}(\Theta)$, induced by the one on $X_0$.
\end{thm}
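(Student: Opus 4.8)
The plan is to run Kuranishi's classical construction of $\phi$, but with a metric chosen so that every ingredient becomes $K$-equivariant, and then to read off the equivariance of $\phi$ from the uniqueness of the solution. First I would record the $K$-action that appears in the statement: since $K$ acts on $X_0$ by biholomorphisms, pullback of $\Theta$-valued forms gives a linear $K$-action on each $A^{0,q}(\Theta)$. By naturality of pullback under biholomorphisms, this action commutes with $\overline{\partial}$ and is compatible with the bracket, i.e. $k\cdot[\psi,\psi']=[k\cdot\psi,k\cdot\psi']$ for all $k\in K$. This is exactly the $K$-action on $A^{0,1}(\Theta)$ referred to in the theorem.

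The crucial step, and the one place where compactness of $K$ is essential, is the choice of metric. Starting from an arbitrary Hermitian metric on $\Theta$ and averaging it over $K$ against the Haar measure, I obtain a $K$-invariant Hermitian metric. With respect to such a metric the formal adjoint $\overline{\partial}^*$, the Laplacian $\Box=\overline{\partial}\,\overline{\partial}^*+\overline{\partial}^*\overline{\partial}$, the harmonic projector $H$ onto $\mathbb{H}^{0,1}\cong H^1(X_0,\Theta)$, and the Green operator $G$ all commute with the $K$-action (the latter two because they are built spectrally from $\Box$). In particular $\mathbb{H}^{0,1}$ becomes a finite-dimensional $K$-representation, and this linear action is the candidate $K$-action on the base $S\subset\mathbb{H}^{0,1}$.

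Next I would solve the Kuranishi equation $\phi(t)=t+\tfrac12\,\overline{\partial}^*G[\phi(t),\phi(t)]$ for $t$ in a small ball of $\mathbb{H}^{0,1}$, in a suitable H\"older or Sobolev completion, obtaining by a contraction argument (equivalently the implicit function theorem) a convergent solution $\phi(t)$ that is real analytic in $t$ and holomorphic in the holomorphic variables, exactly as in the classical proof. Because every operator entering the equation commutes with $K$ and the bracket is $K$-equivariant, the equation itself is $K$-equivariant; the uniqueness of its solution in the relevant norm then forces $\phi(k\cdot t)=k\cdot\phi(t)$, so $\phi$ is $K$-equivariant. Setting $S=\{t : H[\phi(t),\phi(t)]=0\}$, the defining equation is $K$-equivariant, hence $S$ is $K$-invariant, and the linear representation of $K$ on $\mathbb{H}^{0,1}$ restricts to a real analytic $K$-action on $(S,0)$ for which $\phi|_S$ is equivariant.

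Finally, that the resulting $\phi$ satisfies $(i)$--$(iv)$ of Theorem 3.1 is precisely Kuranishi's theorem: $(i)$ and $(ii)$ hold by construction on $S$, while $(iii)$ and $(iv)$ encode the semi-universality of the Kuranishi family and are unaffected by the equivariant choice of metric. I expect the main obstacle to be the equivariance-by-uniqueness argument at the functional-analytic level: one must check that the completions used to run the contraction carry a continuous $K$-action commuting with $\Box$, $G$, $H$ and $\overline{\partial}^*$, and that the solution is genuinely unique in the chosen norm so that $\phi(k\cdot t)=k\cdot\phi(t)$ can be deduced. Verifying that the induced action on $S$ is real analytic, as required by the setting of \S2, is then immediate, since it is merely the restriction of a linear representation.
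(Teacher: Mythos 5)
Your proposal follows essentially the same route as the paper: average the Hermitian metric over $K$ (Weyl's trick), deduce $K$-equivariance of $\overline{\partial}^*$, $\square$, $G$ and $H$, run Kuranishi's construction, and obtain equivariance of $\phi$ from uniqueness of the solution --- the paper phrases this last step as $\phi$ being the local inverse, via the inverse mapping theorem, of the $K$-equivariant map $F(\phi)=\phi-\frac{1}{2}G\overline{\partial}^*[\phi,\phi]$. The functional-analytic caveat you flag at the end is exactly what the paper settles in its Lemma 4.2: the Sobolev ball $W$ need not be $K$-invariant (the Sobolev norm uses charts and a partition of unity), so one uses compactness of $K$ to produce a $K$-invariant neighborhood $U\subset W$ of $0$, on which equivariance of $\phi$ and $K$-invariance of $S=S'\cap U$ then follow as you describe.
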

\begin{coro} Let $X_0$ be a complex compact manifold $X_0$ with a $K$-action, where $K$ is a compact real Lie group. Then there exists a $K$-equivariant semi-universal deformation of $X_0$. 
\end{coro}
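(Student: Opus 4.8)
The plan is to read the statement as an immediate consequence of the two results already in hand: the characterization of semi-universal deformations (Theorem 3.1) and the equivariant-existence criterion (Theorem 3.2), fed by the equivariant Kuranishi map furnished by Theorem 4.1. First I would invoke Theorem 4.1 with $G=K$: it produces a complex space $(S,0)$ and a real analytic map $\phi:(S,0)\to A^{0,1}(\Theta)$, holomorphic in $S$, satisfying conditions $(i)$--$(iv)$ of Theorem 3.1, and in addition $K$-equivariant with respect to some $K$-action on $S$ and the natural $K$-action on $A^{0,1}(\Theta)$ induced by the given action on $X_0$.

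Next I would use Theorem 3.1 to turn $\phi$ into geometry: since $\phi$ satisfies $(i)$ and $(ii)$ it defines a genuine deformation $\pi:X\to S$ of $X_0$, and since it also satisfies $(iii)$ and $(iv)$ this deformation is semi-universal. Finally, because $\phi$ is $K$-equivariant, the hypothesis of Theorem 3.2 is met verbatim with $G=K$, so Theorem 3.2 endows $\pi:X\to S$ with a $K$-equivariant structure, namely a real analytic $K$-action on $X$ extending the one on $X_0$ together with the given $K$-action on $S$ making $\pi$ equivariant. This exhibits $\pi:X\to S$ as a $K$-equivariant semi-universal deformation of $X_0$, which is exactly the assertion.

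The substance therefore sits entirely in Theorem 4.1, not in the corollary; the deduction above is formal once that theorem is available. The one point worth flagging is that the $K$-action on $S$ supplied by Theorem 4.1 must be literally the same action used when applying Theorem 3.2, and that the $K$-action on $A^{0,1}(\Theta)$ in both statements is the common one induced by the action on $X_0$; these match by construction, so no compatibility check is needed. The genuine obstacle --- establishing Theorem 4.1 --- is where compactness of $K$ enters: averaging an arbitrary Hermitian metric on $\Theta$ over $K$ against Haar measure yields a $K$-invariant metric, which renders the associated harmonic projection and Green operator $K$-equivariant; Kuranishi's recursive solution of $\overline{\partial}\phi(s)-\tfrac{1}{2}[\phi(s),\phi(s)]=0$ then inherits $K$-equivariance term by term, and the base $S$, cut out inside the harmonic space by the obstruction map, carries the induced linear $K$-action. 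That equivariant bookkeeping, rather than anything in the present corollary, is the hard part.
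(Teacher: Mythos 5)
Your proposal is correct and matches the paper's own argument, which likewise deduces the corollary immediately from Theorem 4.1 (the $K$-equivariant Kuranishi map $\phi$) combined with Theorem 3.2 (the equivariance criterion), with Theorem 3.1 implicitly supplying the passage from $\phi$ to the semi-universal family. Your additional remarks correctly locate the real content in Theorem 4.1, whose proof in the paper proceeds exactly as you sketch, via a $K$-invariant Hermitian metric and the resulting equivariance of the Green and harmonic projection operators.
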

\begin{proof} It follows immediately from Theorem 4.1 above and Theorem 3.2.
\end{proof}

In order to prove Theorem 4.1, we shall follow Kuranishi's method in $[8]$ with some appropriate modification. First of all, note that we have a natural linear $K$-action on $A^{0,1}(\Theta)$ and then on $H^1(X_0,\Theta)$. Moreover, since $K$ is compact, instead of imposing an arbitrary hermitian metric on $\Theta$ as Kuranishi did, we can impose a $K$-invariant Hermitian metric $\left \langle \cdot,\cdot \right \rangle$ on $\Theta$ by means of Weyl's trick. Therefore, we have a $K$-invariant metric on $A^{0,1}(\Theta)$. As usual, we find the formal adjoint $\overline{\partial}^*$ of $\overline{\partial}$. Since $K$ acts on $X_0$ by biholomorphisms then the operator $\overline{\partial}$ is $K$-equivariant. By the adjoint property together with the fact that the imposed metric is $K$-invariant, we also have that $\overline{\partial}^*$ is $K$-equivariant. Hence, so is the Laplacian 
$\square:= \overline{\partial}^*\overline{\partial}+\overline{\partial}\overline{\partial}^*$. In	addition, it is well-known that $\square$ is an elliptic operator of second order. As a matter of fact, Hodge theory provides us a famous orthogonal decomposition.
\begin{equation} A^{0,1}(\Theta)=\mathcal{H}^{0,1}\bigoplus \square A^{0,1}(\Theta) 
\end{equation} and two linear operators:
\begin{enumerate}
\item[(a)] The Green operator $G:$ $A^{0,1}(\Theta)\rightarrow\square A^{0,1}(\Theta) $,
\item[(b)] The harmonic projection operator $H:$ $A^{0,1}(\Theta)\rightarrow\mathcal{H}^{0,1} $,
\end{enumerate}
where $\mathcal{H}^{0,1}$ is the vector space of all harmonic vector $(0,1)$-form on $X_0$ (this space can also be canonically identified with $H^1(X_0,\Theta)$), such that for all $v\in A^{0,1}(\Theta) $, we have 
\begin{equation}
v=Hv+\square Gv.
\end{equation}
\begin{lem} The linear operators $G$ and $H$ are $K$-equivariant.

\end{lem}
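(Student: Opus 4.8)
The plan is to deduce the $K$-equivariance of $H$ and $G$ purely formally from two facts already established: that the Laplacian $\square$ is $K$-equivariant, and that the chosen metric on $A^{0,1}(\Theta)$ is $K$-invariant, so that each $g\in K$ acts as an isometry of $A^{0,1}(\Theta)$. Everything will then follow from the uniqueness built into the Hodge decomposition (3.1), with no further analytic input.

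First I would treat $H$. Since $\square$ commutes with the $K$-action, the harmonic subspace $\mathcal{H}^{0,1}=\ker\square$ is $K$-invariant: if $\square v=0$ then $\square(g\cdot v)=g\cdot\square v=0$. Because $g$ is an isometry, it preserves orthogonality, so it also preserves the orthogonal complement $\square A^{0,1}(\Theta)=(\mathcal{H}^{0,1})^{\perp}$. Now $H$ is exactly the orthogonal projection onto $\mathcal{H}^{0,1}$ associated to the decomposition (4.1), and any isometry stabilizing both summands of an orthogonal decomposition commutes with the corresponding projections; writing $v=Hv+(1-H)v$ with $Hv\in\mathcal{H}^{0,1}$ and $(1-H)v\in(\mathcal{H}^{0,1})^{\perp}$ and applying $g$ gives $g\cdot v=g\cdot Hv+g\cdot(1-H)v$ with the two terms again in $\mathcal{H}^{0,1}$ and $(\mathcal{H}^{0,1})^{\perp}$ respectively, so uniqueness of the decomposition forces $H(g\cdot v)=g\cdot Hv$. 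Hence $gH=Hg$ for all $g\in K$.

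For $G$ I would use its characterizing relations $\square G=1-H$ (which is just (4.2)) together with $GH=HG=0$ and the fact that $G$ takes values in $(\mathcal{H}^{0,1})^{\perp}$. Fix $g\in K$ and set $G'=g\,G\,g^{-1}$. Using $g\square=\square g$ and the equivariance of $H$ just proved, one computes $\square G'=g\,\square G\,g^{-1}=g(1-H)g^{-1}=1-H=\square G$, so $\square(G'-G)=0$ and the image of $G'-G$ lies in $\ker\square=\mathcal{H}^{0,1}$. On the other hand, since $g$ preserves $(\mathcal{H}^{0,1})^{\perp}$ and $G$ maps into $(\mathcal{H}^{0,1})^{\perp}$, both $G'$ and $G$ have image in $(\mathcal{H}^{0,1})^{\perp}$, so the image of $G'-G$ lies in $(\mathcal{H}^{0,1})^{\perp}$ as well. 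As $\mathcal{H}^{0,1}\cap(\mathcal{H}^{0,1})^{\perp}=\{0\}$, we conclude $G'=G$, that is $gG=Gg$.

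I do not expect any serious obstacle here: all the hard analysis (ellipticity of $\square$, existence of the orthogonal splitting (4.1), boundedness of $G$ and $H$) is supplied by the Hodge decomposition invoked just before the statement. The only points requiring care are the two structural inputs, namely that $K$ acts isometrically (which is exactly why the $K$-invariant metric was introduced via Weyl's trick) and that $\mathcal{H}^{0,1}$ is $K$-stable (which is exactly the $K$-equivariance of $\square$); once these are in hand the argument is formal linear algebra on the Hilbert space $A^{0,1}(\Theta)$ and the uniqueness of the Green operator.
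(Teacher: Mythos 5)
Your proof is correct, and it reaches the lemma by a genuinely different organization of the same Hodge-theoretic ingredients. The paper goes in the opposite order: it first proves $Gg=gG$, and does so in two steps — first for $v\in\square A^{0,1}(\Theta)$, where the relation $v=\square Gv$ and the $K$-equivariance of $\square$ show that $Ggv-gGv$ lies in $\mathcal{H}^{0,1}\cap\square A^{0,1}(\Theta)=\{0\}$, and then for general $v$ by writing $v=Hv+\square Gv$ and using $GH=0$ — and only afterwards deduces the equivariance of $H$ from the identity $H=1-\square G$. You instead prove $H$-equivariance first, as an immediate consequence of the uniqueness of the decomposition $(4.1)$ once both summands are known to be $K$-stable, and then dispatch $G$ in a single operator-level computation comparing $gGg^{-1}$ with $G$: their difference has image in $\ker\square\cap\square A^{0,1}(\Theta)=\{0\}$. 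This buys you a cleaner argument with no case analysis, at the cost of one extra input: you invoke the isometry property of the $K$-action to see that $g$ preserves $(\mathcal{H}^{0,1})^{\perp}=\square A^{0,1}(\Theta)$. That input is legitimate (the metric was made $K$-invariant precisely by Weyl's trick), but it is not needed — the $K$-invariance of $\square A^{0,1}(\Theta)$ already follows from the $K$-equivariance of $\square$ and the invertibility of the $g$-action, which is what the paper uses. One further cosmetic point: $A^{0,1}(\Theta)$ is a pre-Hilbert space of smooth forms rather than a Hilbert space, but since your argument only uses the algebraic orthogonal splitting supplied by Hodge theory, nothing is affected.
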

\begin{proof}
For any $v \in \square A^{0,1}(\Theta) $ and $g\in K$, $gv$ is also in $\square A^{0,1}(\Theta)$ for the sake of $K$-invariance of $\square A^{0,1}(\Theta)$. Thus, by (4.2) we have that $$v=\square Gv \text{ and }  gv=\square Ggv.$$ So, the $K$-equivariance of $\square$ gives us
$$\square \left( g  Gv\right)=g \square \left( Gv \right)=gv.$$ Hence, 
$$\square \left( Ggv -gGv \right) =0$$ so that $ Ggv -gGv \in \mathcal{H}^{0,1}$. On the other hand, $gGv \in \square A^{0,1}(\Theta)$, and so is $Ggv -gGv. $ Consequently,
$$Ggv -gGv \in \mathcal{H}^{0,1} \cap\square A^{0,1}(\Theta) =\left \{ 0 \right \} $$ so that $$Ggv =gGv$$ for any $v \in \square  A^{0,1}(\Theta)$ and $g \in G$.

Now for any $v \in  A^{0,1}(\Theta) $ and $g\in K$, we have that \begin{align*}
gGv &= gG(Hv+\square Gv)\\ 
 &=gGHv+gG\left ( \square Gv \right )\\ 
 &=gG\left ( \square Gv \right )\text{ since }GH=0, \\ 
 &=G\left ( g\square Gv \right ) \text{ by the above case},  \\ 
 &= G g\left (v-Hv\right ) \text{ by the decomposition (4.2)},\\
&=Ggv-GgHv \\
&=Ggv \text{ since }\mathcal{H}^{0,1}\text{ is also } K\text{-invariant}.
\end{align*} Thus, the $K$-equivariance of $G$ follows.

For the $K$-equivariance of $H$, we have that
\begin{align*}
gHv &=g(v-\square Gv) \\ 
 &= gv-g\square Gv\\
 &= gv-\square Ggv \text{ since }\square, G \text{ are } K\text{-equivariant},\\ 
 &= Hgv.
\end{align*}
This ends the lemma.
\end{proof}
Next, Kuranishi would like to parametrize the set
$$\Phi:=\left \{  \phi \in A^{0,1}(\Theta) \mid \overline{\partial}\phi-\frac{1}{2}[\phi,\phi]=0, \overline{\partial}^*\phi=0\right \}$$ which actually forms an effective and complete family. We shall repeat briefly his argument. For any $\phi \in \Phi$, we have that
$$\square \phi -\frac{1}{2}\overline{\partial}^*[\phi,\phi]=0.$$ Applying Green's operator on this, we get
$$\phi -\frac{1}{2}G\overline{\partial}^*[\phi,\phi]=H\phi.$$ Thus, $\Phi$ is a subset of $$\Psi:=\left \{  \phi \in A^{0,1}(\Theta) \mid \phi-\frac{1}{2}G\overline{\partial}^*[\phi,\phi]\in \mathcal{H}^{0,1}\right \}. $$
Therefore, it is natural to parametrize $\Psi$ first. Let $\lbrace U_\sigma\rbrace$ be a finite covering of $X_0$ and $x_\sigma=(x_{1\sigma},\cdots,x_{n\sigma})$ be a local chart of $X_0$ on $U_\sigma$.  Let $\lbrace f_\sigma \rbrace$ be a smooth partition of unity with respect to the covering $\lbrace U_\sigma\rbrace$ of $X_0$. We introduce another norm in $A^{0,1}(\Theta)$. For $l=(l_1,\cdots,l_n)$, where $l_j$ is some non-negative integer ($j\in [1,\cdots,n]$), we denote by $D_\sigma^l$, the partial derivative
$$\left ( \frac{\partial }{\partial x_{1_\sigma}} \right )^{l_1}\cdots\left ( \frac{\partial }{\partial x_{n_\sigma}} \right )^{l_n}$$ and set $\left | l \right |=l_1+\cdots+l_n$. For $u \in A^{0,1}(\Theta)$ and for an integer $k\geq 0$, we set $$\left \| u \right \|_k^{2}=\sum_\sigma\sum_{\left | l \right |\leq k}\int \left \langle D_\sigma^lf_\sigma u(x_\sigma) ,D_\sigma^lf_\sigma u(x_\sigma)\right \rangle dv $$
where $dv$ is the volume element of $X_0$. This norm is called Sobolev $k$-norm. From now on, we fix once for all a sufficiently large integer $k$. Let $\mathfrak{H}^k(\Theta)$ be the Hilbert space obtained by completing $ A^{0,1}(\Theta)$ with respect to this Sobolev $k$-norm. Making use of Inverse Mapping Theorem for Banach manifolds to the map \begin{align*} 
F :A^{0,1}(\Theta)&\rightarrow A^{0,1}(\Theta)\\
\phi &\mapsto \phi-\frac{1}{2}G\overline{\partial}^*[\phi,\phi],
\end{align*} 
there exists a complex Banach analytic map
$\phi: W \rightarrow \mathfrak{H}^k(\Theta)$ such that 
$$s=F\phi(s)=\phi(s)-\frac{1}{2}G\overline{\partial}^*[\phi(s),\phi(s)]$$ for all $s \in W$, where $$W:= \left \{  s\in \mathcal{H}^{0,1} \mid  \left \| s \right \|_k< \epsilon \right \}  $$ and $\epsilon$ is sufficiently small. Hence, for $s \in W$, we have that $$\square \phi(s)-\frac{1}{2}\overline{\partial}^*[\phi(s),\phi(s)]=0,$$ which follows from the fact that $\square G\overline{\partial}^*= \overline{\partial}^*$ and that $s$ is harmonic. By the regularity of elliptic differential operators, we deduce that $\phi$ is holomorphic and that the image of $\phi$ is actually in $A^{0,1}(\Theta)$. In other words, we obtain a holomorphic map 
\begin{equation}
\phi: W \rightarrow A^{0,1}(\Theta)
\end{equation}
 whose image, by construction, covers a neighborhood of $0$ in $\Psi$ and so, a neighborhood of $0$ in $\Phi$.

Finally, a necessary and sufficient condition on $s$ for $\phi(s)$ to be in $\Phi$ is that $H[\phi(s),\phi(s)]=0$. Set $S':= \left \{  s\in W \mid  H[\phi(s),\phi(s)]=0\right \} $. Restricting on $S'$, we obtain a holomorphic map 
\begin{equation}
\phi: S' \rightarrow A^{0,1}(\Theta)
\end{equation} which satisfies the conditions ($i$), ($ii$), ($iii$) and ($iv$) in Theorem 3.1.

Now, we add the $K$-action. Recall that the Lie bracket $[\cdot,\cdot]$ on $A^{0,1}(\Theta)$ is defined as follows. For two element $\alpha,\beta \in A^{0,1}(\Theta)$ given in local coordinates 
$$\alpha =\sum m_i^ud\overline{z}^i\bigotimes \frac{\partial}{\partial z^u} \text{ and } \beta =\sum n_j^vd\overline{z}^j\bigotimes \frac{\partial}{\partial z^v}$$
then $$\left [ \alpha,\beta \right ]: =\sum d\overline{z}^i\wedge d\overline{z}^j\bigotimes\left [m_i^u\frac{\partial}{\partial z^u}, n_j^v\frac{\partial}{\partial z^v} \right ]'$$ where $\left[\cdot,\cdot \right ]'$ is the usual Lie bracket for the Lie algebra of vector fields on $X_0$. Let $g \in K$ then
$$g.\alpha := \sum g^*\left (d\overline{z}^i  \right )\bigotimes g_*\left (g_i^u\frac{\partial}{\partial z^u}  \right ) $$ where $g^*$ and $g_*$ are the pull-back of differential forms and the push-forward of vector fields, respectively. With this definition, the $G$-action clearly commutes with the Lie bracket, i.e.
$$g[\cdot,\cdot]=[g\cdot,g\cdot]$$ because the wedge product $\wedge$ and the Lie bracket $\left[\cdot,\cdot \right ]'$ do. Moreover, $G$ and $\overline{\partial}^*$ are $K$-equivariant. Thus, $F$ is also $K$-equivariant. 
\begin{lem} There exists an open neighborhood $U$ of $0$ contained in $W$ such that $U$ is $K$-invariant.

\end{lem}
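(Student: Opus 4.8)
The plan is to exploit two facts already in place: first, that the harmonic space $\mathcal{H}^{0,1}$, in which $W$ lives, is finite-dimensional and $K$-invariant; and second, that $K$ is compact, so Weyl's averaging trick---already invoked to produce the $K$-invariant metric---can be applied once more, this time at the level of norms on $\mathcal{H}^{0,1}$.

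First I would record that the induced $K$-action on $A^{0,1}(\Theta)$ preserves the finite-dimensional subspace $\mathcal{H}^{0,1}\cong H^1(X_0,\Theta)$; this is precisely the $K$-equivariance of $H$ established in Lemma 4.2 (and in any case follows from the $K$-equivariance of $\square$). Moreover this restricted action is continuous: since the $K$-action on $X_0$ is real analytic, the evaluation map $K\times\mathcal{H}^{0,1}\to\mathcal{H}^{0,1}$, $(g,s)\mapsto g\cdot s$, is continuous. The difficulty is that the Sobolev $k$-norm $\|\cdot\|_k$ defining $W$ is built from arbitrary local charts $x_\sigma$ and a partition of unity $\{f_\sigma\}$, so there is no reason for it to be $K$-invariant, and hence no reason for the ball $W$ itself to be $K$-invariant.

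To remedy this I would average. Let $dg$ denote the normalized Haar measure on the compact group $K$, and set $\|s\|^K_k:=\left(\int_K\|g\cdot s\|_k^2\,dg\right)^{1/2}$ for $s\in\mathcal{H}^{0,1}$ (equivalently, average the inner product underlying the Sobolev norm). Continuity of $g\mapsto\|g\cdot s\|_k$ together with compactness of $K$ makes the integral well-defined, and bi-invariance of Haar measure gives $\|h\cdot s\|^K_k=\|s\|^K_k$ for every $h\in K$, so $\|\cdot\|^K_k$ is a genuine $K$-invariant norm on $\mathcal{H}^{0,1}$. Since $\mathcal{H}^{0,1}$ is finite-dimensional, $\|\cdot\|_k$ and $\|\cdot\|^K_k$ are equivalent, so there is a constant $c>0$ with $\|s\|_k\le c\,\|s\|^K_k$ for all $s$. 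I would then put $U:=\{\,s\in\mathcal{H}^{0,1}\mid \|s\|^K_k<\epsilon/c\,\}$: it is open as a norm ball, contains $0$, is $K$-invariant by $K$-invariance of $\|\cdot\|^K_k$, and lies in $W$ because $\|s\|^K_k<\epsilon/c$ forces $\|s\|_k<\epsilon$.

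The main obstacle is conceptual rather than computational: one must notice that $W$ is a priori \emph{not} $K$-invariant, and that the cure is to replace the ad hoc Sobolev ball by a ball for an averaged, manifestly $K$-invariant norm---which succeeds only because $\mathcal{H}^{0,1}$ is finite-dimensional (so all norms are equivalent) and $K$ is compact (so the averaging converges). As an alternative avoiding averaging, I note one could instead take $U:=K\cdot\{\,s\mid\|s\|_k<\epsilon/M\,\}$, where $M:=\sup_{g\in K}\|g\|_{\mathrm{op}}<\infty$ is the supremum, finite by compactness, of the operator norms of the $K$-action on $(\mathcal{H}^{0,1},\|\cdot\|_k)$; this set is a union of $K$-orbits, hence $K$-invariant, open as a union of open sets, and contained in $W$ since $\|g\cdot s\|_k\le M\|s\|_k<\epsilon$.
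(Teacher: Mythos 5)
Your proposal is correct, but it takes a genuinely different route from the paper. The paper's proof is purely point-set topological, a tube-lemma argument: by continuity of the action, each $g\in K$ admits a neighborhood $V_g$ in $K$ and a neighborhood $K_g$ of $0$ in $\mathcal{H}^{0,1}$ with $V_g\cdot K_g\subseteq W$; compactness of $K$ gives a finite subcover $K=\bigcup_{g\in I}V_g$, and with $P:=\bigcap_{g\in I}K_g$ one obtains $K\cdot P\subseteq W$, so that $U:=K\cdot P$ is the desired open $K$-invariant neighborhood --- open because each element of $K$ acts as a homeomorphism (the formula $\bigcup_{g\in K}V_gK$ printed in the paper is evidently a misprint for $K\cdot P$). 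Note that this argument needs neither the linearity of the action nor the finite-dimensionality of $\mathcal{H}^{0,1}$. Your main route instead manufactures a $K$-invariant norm by Haar-averaging the Sobolev inner product on $\mathcal{H}^{0,1}$ and takes a ball for that norm, invoking finite-dimensionality for norm equivalence; this is a different mechanism, in the spirit of the Weyl trick the paper already used for the Hermitian metric, and it yields slightly more (a $K$-invariant norm, hence invariant neighborhoods of every small radius), at the cost of using the linear structure and finite dimension. Your alternative construction $U:=K\cdot\{s\mid\|s\|_k<\epsilon/M\}$ with $M:=\sup_{g\in K}\|g\|_{\mathrm{op}}$ is in effect a quantitative version of the paper's proof: both take the $K$-orbit of a sufficiently small ball, the smallness being certified by compactness (a uniform operator-norm bound in your case, the tube lemma in the paper's). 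All three arguments are sound; the only slips in your write-up are cosmetic, e.g.\ the $K$-equivariance of $H$ is Lemma 4.1 of the paper, not Lemma 4.2.
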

\begin{proof}
For each $g\in K$, there exists a neighborhood $V_g$ of $g$ and $K_g$ of $0$ such that $V_g.K_g \in W$. By the compactness of $K$, there exists a finite set $I \subset K$ such that $K=\bigcup_{g\in I}V_g$. Let $P=\bigcap_{g\in I}K_g$ then $P$ is an open neighborhood of $0$ in $\mathcal{H}^{0,1}$. Thus,
$$K.P =\left (\bigcup_{g\in I}V_g   \right ). \left (\bigcap_{g\in I}K_g  \right ) \subseteq W.$$
Finally, set $U:=\bigcup_{g\in K}V_gK$. This is the desired $K$-invariant open neighborhood of $0$ contained in $W$.
\end{proof}
Now, restricting the map in (4.3) on this $U$, we obtain a map 
\begin{equation}
\phi: U \rightarrow \phi(U)\subseteq A^{0,1}(\Theta)
\end{equation} which is $K$-equivariant because it is the inverse of the $K$-equivariant map $F$ on $U$. Finally, set $S:= S'\cap U$.
\begin{lem}
$S$ is $K$-invariant and this $K$-action is real analytic.
\end{lem}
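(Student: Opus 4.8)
The plan is to establish two things about $S := S' \cap U$: first, that it is $K$-invariant, and second, that the $K$-action restricted to it is real analytic. Recall that $S'$ is cut out inside $W$ by the equation $H[\phi(s),\phi(s)] = 0$, and that $U$ is the $K$-invariant neighborhood produced in Lemma 4.3. Since an intersection of two $K$-invariant sets is $K$-invariant, the invariance of $S$ reduces entirely to the invariance of $S'$, which is the first substantive step.

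First I would prove that $S'$ is $K$-invariant. Fix $s \in S'$ and $g \in K$; I must show $H[\phi(gs),\phi(gs)] = 0$, given that $H[\phi(s),\phi(s)] = 0$. The key inputs are already assembled in the text just before the lemma: the map $\phi$ restricted to $U$ is $K$-equivariant (equation (4.5)), so $\phi(gs) = g\cdot\phi(s)$; the Lie bracket satisfies $g[\cdot,\cdot] = [g\cdot,g\cdot]$; and the harmonic projection $H$ is $K$-equivariant by Lemma 4.1. Chaining these,
\begin{align*}
H[\phi(gs),\phi(gs)] &= H[g\phi(s),g\phi(s)] \\
&= H\bigl(g[\phi(s),\phi(s)]\bigr) \\
&= g\,H[\phi(s),\phi(s)] = g\cdot 0 = 0.
\end{align*}
Hence $gs \in S'$, and since $g$ was arbitrary, $S'$ is $K$-invariant. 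Combined with the invariance of $U$, this yields that $S = S' \cap U$ is $K$-invariant, so the $K$-action on $U$ restricts to a $K$-action on $S$.

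Next I would address real analyticity. The $K$-action on the ambient space comes from the linear $K$-action on $\mathcal{H}^{0,1} \cong H^1(X_0,\Theta)$, which is manifestly real analytic (indeed linear) in the $\mathcal{H}^{0,1}$-variable and real analytic in $g$ by construction. The action map $K \times U \to U$ is therefore real analytic as a map into the Hilbert space $\mathfrak{H}^k(\Theta)$; what must be checked is that its restriction to $K \times S$ lands in $S$ real-analytically as a map of complex spaces. The point is that $S$ is a complex subspace of $U$ (cut out by the holomorphic equation $H[\phi(s),\phi(s)]=0$), and the action preserves it by the invariance just shown. I would invoke Proposition 3.1: the action map is real analytic (in particular $C^\alpha$ for the relevant $\alpha$) and is holomorphic at each nonsingular point of $S$ — the latter because, after fixing $g$, the map $s \mapsto gs$ is the restriction of the linear, hence holomorphic, map on $\mathcal{H}^{0,1}$ — so Proposition 3.1 upgrades it to a genuine holomorphic (real analytic in $g$) action in the sense required by Definition 2.1.

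\textbf{The main obstacle} I anticipate is the real-analyticity verification rather than the invariance, which is a short formal computation. Specifically, one must be careful that restricting to the complex subspace $S$, which may be singular, does not break holomorphicity in the $s$-variable; this is precisely the subtlety Proposition 3.1 is designed to handle, so the crux is confirming that its hypotheses genuinely apply — that the action is of class $C^\alpha$ and holomorphic at nonsingular points. Verifying the joint dependence on $(g,s)$ meets the two conditions of Definition 2.1 (openness of the sets $W_{\overline{U},V}$ and real analyticity of the pullback map $*$) is the remaining bookkeeping, and I expect it to follow from the linearity of the action on $\mathcal{H}^{0,1}$ together with the real analyticity of the original $K$-action on $X_0$.
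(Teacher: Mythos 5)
Your proof is correct and follows essentially the same route as the paper: the invariance computation (equivariance of $\phi$ on $U$, compatibility of the $K$-action with the bracket, and the $K$-equivariance of $H$ from Lemma 4.1) is exactly the paper's chain of equalities, and the real analyticity is justified, as in the paper, by the observation that the action on $S$ is the restriction of a linear $K$-action on the ambient space. The only difference is your detour through Proposition 3.1, which is unnecessary --- the lemma claims only real analyticity, and that already follows from the linearity of the ambient action together with the real analyticity of the $K$-representation on $\mathcal{H}^{0,1}$ --- but it does no harm.
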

\begin{proof}
Let $s \in S'\cap U$ and $g \in K$ then we have
\begin{align*}
H[\phi(g.s),\phi(g.s)]&=H[g.\phi(s),g.\phi(s)] \text{ since } \phi \text{ is } K\text{-equivariant on }U,\\ 
 &=Hg.[\phi(s),\phi(s)] \text{ since the action commutes with the bracket},\\ 
 &=gH [\phi(s),\phi(s)] \text { by Lemma 4.1},\\ 
 &= g.0 \text { since s \ in }S',\\ 
 &= 0.
\end{align*} Thus, $g.s \in S'$. Moreover, $g.s \in U$ by the construction of $U$. Hence, $g.s \in S'\cap U $ so that $S$ is $K$-invariant. The part that this $K$-action on $S$ is real analytic follows from the fact that it is the restriction of a linear $K$-action on $U$.
\end{proof}
\begin{proof}[Proof of Theorem 4.1]
The restriction of the map $\phi$ in (4.5) on $S$ gives us a map 
$$
\phi: S \rightarrow  A^{0,1}(\Theta)
$$ which satisfies all the conditions given in the theorem.
\end{proof}

\section{The case that $G$ is a complex reductive Lie group}
In this final section, we would like to extend Corollary 4.1 to the case that $G$ is a complex reductive Lie group. 

We begin by introducing the definition of holomorphic local $(G,K)$-action on a complex space $X$ where $K$ is a compact subgroup of $G$. Denote by $\prod_X$ the collection of all pair $\pi=(U_\pi,V_\pi)$, where $U_\pi$ and $V_\pi$ are open subsets in $X$ such that $U_\pi\Subset V_\pi$. Suppose that for each $\pi \in \prod_X$ we have an open neighborhood $G_\pi$ of $K$ and a mapping $\Phi_\pi:\; G_\pi \rightarrow \Ho(U_\pi,V_\pi)$ where $\Ho(U_\pi,V_\pi)$ is the set of all holomorphic functions from $U_\pi$ to $V_\pi$.
\begin{defi} One says that the system $\lbrace \Phi_\pi\rbrace$ defines a local $(G,K)$-action on $X$ if the following conditions are satisfied.
\begin{enumerate}
\item[(a)] For all $g,h \in G$ such that $k:=gh\in G_\pi$, we have
$$\Phi_{\pi}(g)\circ \Phi_\pi(h)\mid_{U_{\pi,h}}= \Phi_\pi(k)\mid_{U_{\pi,h}}$$ where $U_{\pi,h}:=\lbrace x \in U_\pi \mid \Phi_\pi(h)(x)\in U_\pi \rbrace $;
\item[(b)] $\Phi_\pi(\mathbf{1}_G)=\mathbf{id}$;
\item[(c)] for all $\pi, \rho \in \prod_X$ and $g\in G_\pi\cap G_\rho$ we have
$$\Phi_\pi(g)\mid_{U_{\pi}\cap U_{\rho}}=\Phi_\rho(g)\mid_{U_{\pi}\cap U_{\rho}} $$ so that $gx:=\Phi_\pi(g)x$ is independent of the choice of $\pi$ with $x\in U_\pi,g\in G_\pi$;
\item[(d)] for any two open sets $U\Subset U_\pi$ and $V\Subset V_\pi$, the set 
$$W:=W_{\overline{U},V}:=\lbrace g\in G_\pi \mid g\cdot \overline{U}\subset V \rbrace $$ is open in $G_\pi$ and the map \begin{align*}
*:W&\rightarrow \mathcal{O}(U)\\
g &\mapsto f\circ g\mid_U
\end{align*} is continuous for all $f\in \mathcal{O}(V)$ where $\overline{U}$ is the closure of $U$ and $\mathcal{O}(P)$ is the set of holomorphic functions on $P$ for any open subset $P$ of $X$;
\item[(e)] The restriction of the system $\lbrace \Phi_\pi\rbrace$ on $K$ gives a global $K$-action on $X$, i.e. a homomorphism of topological groups $\Phi:\; K\rightarrow \Au(X)$.
\end{enumerate}
Moreover, if $G$ is a real (resp. complex) Lie group and if  $*$ and $\Phi$ are real analytic (resp. holomorphic), then the local $(G,K)$-action is called real analytic (resp. holomorphic). Two local $(G,K)$-actions defined by two systems $\lbrace \Phi_\pi\rbrace$ and $\lbrace \Phi'_\pi\rbrace$  are said to be equivalent if for all $\pi\in \prod_X$, the mappings $\Phi_\pi:\; G_\pi \rightarrow \Ho(U_\pi,V_\pi)$ and $\Phi'_\pi:\; G'_\pi \rightarrow \Ho(U_\pi,V_\pi)$ coincide on a sub-domain $G_\pi \cap G'_\pi$ containing $K$ and their restrictions on $K$ give the same global $K$-action. 
\end{defi}

As before, by local $G$-action, we really mean real analytic local $G$-action. If we let $K$ be the identity element of $G$ in Definition 5.1 then we recover the usual definition of (holomorphic) local $G$-action on complex spaces (see [1], Section 1.2 for more details). In this case, we have the following theorem ([1], page 25, Corollary).
\begin{thm}
Let $G$ be a (complex) Lie group, $\mathfrak{g}$ the Lie algebra of $G$, and $S$ a complex space. Then we have two bijections
$$\begin{Bmatrix}
equivalence\; classes \; of \\  local \;
G\text{-}actions\;  on \; S

\end{Bmatrix} \longleftrightarrow \begin{Bmatrix}
 Lie \;algebra \;homomorphisms \\ \mathfrak{g} \rightarrow \mathcal{T}_S(S)

\end{Bmatrix}$$

$$\begin{Bmatrix}
equivalence\; classes \; of \\ holomorphic\;  local \\
G\text{-}actions\;  on \; S

\end{Bmatrix} \longleftrightarrow \begin{Bmatrix}
complex \;Lie \;algebra \\homomorphisms \;\mathfrak{g} \rightarrow \mathcal{T}_S(S)

\end{Bmatrix}$$
where $\mathcal{T}_S(S)$ is the set of holomorphic vector fields on $S$.
\end{thm}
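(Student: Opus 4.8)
The plan is to exhibit, in each row, the two mutually inverse maps relating local actions and Lie algebra homomorphisms, treating the holomorphic bijection as the $\mathbb{C}$-linear refinement of the real-analytic one. I would first set up the easy \emph{differentiation} map. Given an equivalence class of local $(G,\lbrace e\rbrace)$-actions represented by a system $\lbrace\Phi_\pi\rbrace$, each $\xi\in\mathfrak{g}$ determines the one-parameter subgroup $t\mapsto\exp(t\xi)$, which for small $t$ lies in every $G_\pi$; by the compatibility axiom (c) the maps $\Phi_\pi(\exp(t\xi))$ glue to a local flow on $S$, and differentiating at $t=0$ produces a derivation $\rho(\xi)\in\mathcal{T}_S(S)$ via $\rho(\xi)f=\frac{d}{dt}\big|_{t=0}\bigl(f\circ\Phi(\exp(t\xi))\bigr)$. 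Axiom (d) together with real analyticity of $*$ guarantees that this derivative is a genuine holomorphic vector field; the standard comparison of $[\xi,\eta]$ with the commutator of the two flows shows $\rho$ is a Lie algebra homomorphism, and it depends only on the equivalence class since equivalent systems coincide near $e$. When the action is \emph{holomorphic}, $g\mapsto f\circ g$ is holomorphic in $g$, so $\rho$ is moreover $\mathbb{C}$-linear; this is exactly the distinction between the two rows.

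The substantive direction is \emph{integration}. Given a homomorphism $\rho\colon\mathfrak{g}\to\mathcal{T}_S(S)$, fix a basis $\xi_1,\dots,\xi_r$ of $\mathfrak{g}$ and let $\lambda_1,\dots,\lambda_r$ be the associated left-invariant vector fields on $G$. On $G\times S$ I would form the vector fields $\Xi_i:=\lambda_i+\rho(\xi_i)$, the two summands acting on distinct factors. Since $[\lambda_i,\lambda_j]=\sum_k c_{ij}^k\lambda_k$ with the structure constants $c_{ij}^k$ of $\mathfrak{g}$, while the homomorphism property gives $[\rho(\xi_i),\rho(\xi_j)]=\rho([\xi_i,\xi_j])=\sum_k c_{ij}^k\rho(\xi_k)$ and the cross terms vanish, one gets $[\Xi_i,\Xi_j]=\sum_k c_{ij}^k\Xi_k$, so the distribution $\mathcal{D}=\langle\Xi_1,\dots,\Xi_r\rangle$ is involutive. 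By the holomorphic Frobenius theorem there is an integral leaf $L_s$ through $(e,s)$, and since $\mathcal{D}$ projects isomorphically onto the $G$-factor, $L_s$ is near $(e,s)$ the graph of a holomorphic map $g\mapsto\mu(g,s)$ with $\mu(e,s)=s$; I set $g\cdot s:=\mu(g,s)$. Tangency of $L_s$ to $\mathcal{D}$ forces $d_e\bigl(\mu(\cdot,s)\bigr)(\xi_i)=\rho(\xi_i)(s)$, recovering the infinitesimal data, while left-invariance of the $\lambda_i$ makes $\mathcal{D}$ invariant under left translations on $G$; transporting leaves then yields the action identity $\mu(gh,s)=\mu(g,\mu(h,s))$ wherever defined, with the appropriate side convention. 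Assembling the resulting partial maps over the pairs $\pi=(U_\pi,V_\pi)$ and shrinking domains produces a system $\lbrace\Phi_\pi\rbrace$ verifying (a)--(e). If $\rho$ is $\mathbb{C}$-linear and $G$ is complex, the $\lambda_i$ and hence the leaves are holomorphic in $g$, so the integrated action is holomorphic.

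Finally I would check the two constructions are mutually inverse: differentiating the integrated action returns $\rho$ by the tangency computation above, and two local actions sharing the same $\rho$ have, by uniqueness of integral leaves, the same germ along $\lbrace e\rbrace\times S$, hence are equivalent in the sense of Definition 5.1; this settles both rows simultaneously, the holomorphic one being the restriction to $\mathbb{C}$-linear homomorphisms. \emph{The main obstacle} is the integration step on a complex space $S$ that may be singular: the Frobenius theorem is a manifold statement, whereas $\mathcal{D}$ lives on $G\times S$ and the $\rho(\xi_i)$ are merely derivations of $\mathcal{O}_S$. I would circumvent this either by invoking the existence and holomorphy of flows of holomorphic vector fields on complex spaces, building the leaves by integrating the $\Xi_i$ and composing flows with Baker--Campbell--Hausdorff control, or by locally embedding $S\hookrightarrow\Omega\subseteq\mathbb{C}^N$, lifting the $\rho(\xi_i)$ to holomorphic vector fields on $\Omega$ tangent to $S$, running Frobenius on the smooth $G\times\Omega$, and noting that the leaf through a point of $G\times S$ stays in $G\times S$ because $\mathcal{D}$ is tangent to it. A secondary difficulty is the careful bookkeeping of the domains $U_\pi,V_\pi,G_\pi,U_{\pi,h}$ so that the integrated maps satisfy the full compatibility system (a)--(e) rather than merely a germ-level action.
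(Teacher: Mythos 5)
The paper does not prove this statement at all: Theorem 5.1 is quoted verbatim from Akhiezer [1, p.~25, Corollary], where it ultimately rests on Kaup's theory of integrating holomorphic vector fields on (possibly singular) complex spaces. So the comparison here is between your attempt and a black-box citation. Your overall skeleton --- differentiate a local action to get $\rho$, integrate $\rho$ back, check the two constructions are mutually inverse --- is indeed the classical Palais--Kaup route, and the differentiation half, together with the observation that holomorphy of the action corresponds exactly to $\mathbb{C}$-linearity of $\rho$, is sound.

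The gap is in the integration step, exactly where you located the difficulty, and neither of your proposed fixes closes it. The embedding fix fails for a structural reason: after locally embedding $S\hookrightarrow\Omega\subseteq\mathbb{C}^N$ and lifting each $\rho(\xi_i)$ to a holomorphic vector field $\tilde\rho(\xi_i)$ on $\Omega$ preserving the ideal sheaf $I_S$, the bracket relations survive only modulo $I_S$: the field $[\tilde\rho(\xi_i),\tilde\rho(\xi_j)]-\sum_k c_{ij}^k\,\tilde\rho(\xi_k)$ has coefficients in $I_S$ but need not vanish identically, and in general no choice of lifts preserves the brackets exactly. Hence the distribution $\mathcal{D}=\langle\lambda_i+\tilde\rho(\xi_i)\rangle$ on $G\times\Omega$ is involutive only at points of $G\times S$, not on any open neighborhood of it, so the Frobenius theorem does not apply and there are no leaves whose invariance you could then argue. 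Your first fix --- integrating the $\rho(\xi_i)$ by local flows on the complex space itself and controlling compositions --- is the correct direction, but as stated it merely invokes the two facts that constitute the cited theorem: that a holomorphic vector field on a complex space generates a holomorphic local flow (Kaup), and that for a finite-dimensional Lie algebra these flows assemble, via canonical coordinates of the second kind, into a local $G$-action satisfying the homomorphism property and the domain axioms (a)--(e). That assembly is the actual content of Palais's local theorem and of Kaup's extension to complex spaces; Baker--Campbell--Hausdorff alone does not deliver it, since the action identity must be verified as an identity of holomorphic maps on a singular space, not merely as a formal power-series identity. So the plan is reasonable, but its substantive step is re-invoked rather than proved.
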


\begin{coro}  Let $K$ be a connected compact real Lie group acting on a complex space $X$  and $G$ be the complexification of $K$. There exists a holomorphic local $(G,K)$-action on $X$ extending the initial global $K$-action.
\end{coro}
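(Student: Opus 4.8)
The strategy is to transport the problem to the level of Lie algebras, complexify there, integrate back using Theorem 5.1, and finally spread the resulting germ of an action from the identity to all of $K$ by means of the given global action. First I would differentiate the global $K$-action to extract its infinitesimal generator. For $\xi\in\mathfrak{k}$ the one-parameter subgroup $t\mapsto\exp(t\xi)$ acts on $X$ by biholomorphisms, and differentiating the induced action on the structure sheaf at $t=0$ yields a $\mathbb{C}$-linear derivation of $\mathcal{O}_X$, that is, a holomorphic vector field $\rho(\xi)\in\mathcal{T}_X(X)$. Here the holomorphicity hypothesis on the $K$-action is exactly what is needed: since each $\exp(t\xi)$ is a biholomorphism, the pullback of a holomorphic function stays holomorphic, so its $t$-derivative is holomorphic and $\rho(\xi)$ genuinely lands in $\mathcal{T}_X(X)$ rather than in the larger sheaf of real vector fields. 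The map $\rho\colon\mathfrak{k}\to\mathcal{T}_X(X)$ is $\mathbb{R}$-linear and a homomorphism of real Lie algebras (with $\mathcal{T}_X(X)$ viewed as a real Lie algebra by restriction of scalars); it is moreover $K$-equivariant in the sense that $\rho(\mathrm{Ad}_k\xi)=k_*\rho(\xi)$, the familiar naturality of fundamental vector fields under an honest group action.

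Next I would complexify. Because $G$ is the complexification of $K$ we have $\mathfrak{g}=\mathfrak{k}\otimes_{\mathbb{R}}\mathbb{C}$, and since $\mathcal{T}_X(X)$ is a complex Lie algebra the universal property of complexification extends $\rho$ uniquely to a complex Lie algebra homomorphism
$$\rho^{\mathbb{C}}\colon\mathfrak{g}\longrightarrow\mathcal{T}_X(X),\qquad \rho^{\mathbb{C}}(\xi+i\eta)=\rho(\xi)+i\,\rho(\eta).$$
That $\rho^{\mathbb{C}}$ preserves brackets is purely formal once $\rho$ is known to be a real Lie algebra homomorphism into a $\mathbb{C}$-Lie algebra. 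Applying the holomorphic bijection of Theorem 5.1 with $S=X$ to $\rho^{\mathbb{C}}$, I obtain an equivalence class of holomorphic local $G$-action on $X$, represented by a system $\{\Phi_\pi\}$ defined over neighborhoods of the identity and satisfying conditions (a)--(d) of Definition 5.1 with $K$ replaced by $\{\mathbf{1}_G\}$.

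It remains to upgrade this germ at the identity to a genuine local $(G,K)$-action, whose domains are neighborhoods of all of $K$ and whose restriction to $K$ is the prescribed global action. I would do this by translating the local $G$-action by the global $K$-action: for $g$ close to a point $k\in K$, factor $g=kh$ with $h=k^{-1}g$ close to $\mathbf{1}_G$ and set $\Phi(g):=\Phi_K(k)\circ\Phi_{\mathrm{loc}}(h)$, where $\Phi_K$ denotes the global $K$-action and $\Phi_{\mathrm{loc}}$ the system just produced. The consistency of this definition and the verification of axioms (a), (c) and (e) reduce to two facts: first, that the restriction of $\Phi_{\mathrm{loc}}$ to $K$ near $\mathbf{1}_G$ coincides with $\Phi_K$, which follows from the uniqueness in the real case of Theorem 5.1 since both local $K$-actions have the same generator $\rho$, together with the connectedness of $K$; and second, the $K$-equivariance $\rho(\mathrm{Ad}_k\xi)=k_*\rho(\xi)$ recorded above, which makes the translated pieces agree on overlaps. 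Condition (e) then holds by construction, because $\Phi(k)=\Phi_K(k)\circ\Phi_{\mathrm{loc}}(\mathbf{1}_G)=\Phi_K(k)$.

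I expect this final step to be the main obstacle. Steps one through three are essentially formal given Theorem 5.1, but the passage from a local $G$-action defined only near the identity to one defined near the entire compact group $K$, compatibly with the global $K$-action, requires a careful well-definedness and compatibility check; this is precisely where the connectedness of $K$ and the equivariance of the infinitesimal action enter, and where the bookkeeping of the neighborhoods $G_\pi$, $U_\pi$ and $V_\pi$ in Definition 5.1 must be handled with care.
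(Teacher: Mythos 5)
Your proposal is correct and follows essentially the same route as the paper: obtain the Lie algebra homomorphism $\mathfrak{k}\to\mathcal{T}_X(X)$ from the global $K$-action, extend it $\mathbb{C}$-linearly to $\mathfrak{g}=\mathfrak{k}\otimes_{\mathbb{R}}\mathbb{C}$, integrate it back to a holomorphic local $G$-action via Theorem 5.1, and then combine this with the given global $K$-action using the fact that both induce the same (equivalence class of) local $K$-action. Your explicit translation step $\Phi(g)=\Phi_K(k)\circ\Phi_{\mathrm{loc}}(k^{-1}g)$ is a more detailed account of the patching that the paper merely asserts, so it is a welcome elaboration rather than a deviation.
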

\begin{proof} By Theorem 5.1, the initial $K$-action gives us a Lie algebra homomorphism $\varphi:$ $\Lie(K) \rightarrow \mathcal{T}_X(X)$. Since $\mathcal{T}_X(X)$ is a complex Lie algebra, the $\mathbb{C}$-linear extension of $\varphi$ gives us a complex Lie algebra homomorphism $\varphi^{\mathbb{C}}:$ $\Lie(K)^{\mathbb{C}}=\Lie(G) \rightarrow \mathcal{T}_X(X)$. An application of Theorem 5.1 again provides a holomorphic local $G$-action on $X$. Note that the restriction of this holomorphic local $G$-action on $K$ gives a local $K$-action on $X$, which in fact is equivalent to the initial global one on $X$. This follows from the fact that they correspond to the same Lie algebra homomorphism $\varphi:$ $\Lie(K) \rightarrow \mathcal{T}_X(X)$. Thus, it allows us to define a holomorphic local $(G,K)$-action on $X$ as follows. If $g\in K$ then then the action of $g$ is determined by the initial global $K$-action. If $g\in G \setminus K$ then the action of $g$ is determined by the extended holomorphic local $G$-action. This ends the proof.
\end{proof}

The two following lemmas are helpful in the sequel.
\begin{lem} Let $f:$ $X \rightarrow Y$ be a proper surjective flat map of complex spaces whose geometric fibers are all connected complex compact manifolds. Then the natural maps $\mathcal{O}_Y \rightarrow f_*\mathcal{O}_X$ is an isomorphism.
\end{lem}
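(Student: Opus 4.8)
The plan is to realize the natural morphism $\alpha\colon \mathcal{O}_Y \to f_*\mathcal{O}_X$, which sends the local function $1$ to the constant $1$ on $X$, as an isomorphism by checking it stalk by stalk, reducing the stalkwise statement to a computation on the fibers via Grauert's coherence and base-change theorems. First I would invoke Grauert's direct image theorem: since $f$ is proper, $f_*\mathcal{O}_X$ is a coherent $\mathcal{O}_Y$-module. Consequently all local rings in sight are Noetherian and $(f_*\mathcal{O}_X)_y$ is a finitely generated $\mathcal{O}_{Y,y}$-module, so Nakayama's lemma is available.

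Next I would compute the fibers. For each $y \in Y$ the geometric fiber $X_y$ is, by hypothesis, a connected complex compact manifold; hence every global holomorphic function on it is constant, so $H^0(X_y,\mathcal{O}_{X_y}) = \mathbb{C}$. In particular the function $y \mapsto \dim_{\mathbb{C}} H^0(X_y,\mathcal{O}_{X_y})$ is identically equal to $1$. Because $f$ is flat, $\mathcal{O}_X$ is flat over $Y$, so Grauert's semicontinuity and base-change theorem applies to $\mathcal{F} = \mathcal{O}_X$ in degree $0$: the constancy of this fiber dimension forces $f_*\mathcal{O}_X$ to be locally free of rank $1$ and makes the base-change map $(f_*\mathcal{O}_X)_y \otimes_{\mathcal{O}_{Y,y}} \mathbb{C}(y) \to H^0(X_y,\mathcal{O}_{X_y}) = \mathbb{C}$ an isomorphism for every $y$.

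With these two facts the conclusion is formal. On the fiber over $y$ the image of $1$ under $\alpha$ is precisely the constant function $1$, which generates $H^0(X_y,\mathcal{O}_{X_y}) = \mathbb{C}$; via the base-change isomorphism it therefore generates $(f_*\mathcal{O}_X)_y \otimes_{\mathcal{O}_{Y,y}} \mathbb{C}(y)$. By Nakayama's lemma, $\alpha_y\colon \mathcal{O}_{Y,y} \to (f_*\mathcal{O}_X)_y$ is surjective. Since $(f_*\mathcal{O}_X)_y$ is free of rank $1$, a surjection $\mathcal{O}_{Y,y} \twoheadrightarrow (f_*\mathcal{O}_X)_y \cong \mathcal{O}_{Y,y}$ is multiplication by a unit, hence an isomorphism. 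Alternatively, injectivity of $\alpha_y$ follows directly from flatness: for any $x \in f^{-1}(y)$ the induced local homomorphism $\mathcal{O}_{Y,y} \to \mathcal{O}_{X,x}$ is flat and local, whence faithfully flat and injective, and it factors as $\mathcal{O}_{Y,y} \xrightarrow{\alpha_y} (f_*\mathcal{O}_X)_y \to \mathcal{O}_{X,x}$, so $\alpha_y$ is injective. As $\alpha_y$ is an isomorphism for every $y$, the map $\alpha$ is an isomorphism of sheaves.

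The genuinely substantial ingredient, and the step I expect to be the main obstacle, is the correct application of Grauert's base-change theorem in the analytic category: one must pass from the pointwise identity $\dim_{\mathbb{C}} H^0(X_y,\mathcal{O}_{X_y}) = 1$ to local freeness of $f_*\mathcal{O}_X$ together with the base-change isomorphism, and this is exactly where the flatness of $f$ is indispensable (it is what guarantees $\mathcal{O}_X$ is $Y$-flat, and it also delivers injectivity of $\alpha$ for free). Once the fiberwise dimension is pinned down to $1$ by the maximum-principle computation on compact connected manifolds, everything else is routine commutative algebra.
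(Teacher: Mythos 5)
Your overall architecture (coherence of $f_*\mathcal{O}_X$, the computation $h^0(X_y,\mathcal{O}_{X_y})=1$ via the maximum principle, base change to get local freeness of rank one, then Nakayama plus faithful flatness to conclude stalk by stalk) parallels the paper's proof, and your final stalkwise step is actually \emph{more} careful than the paper's terse ``turns out to be an isomorphism.'' However, there is a genuine gap at the pivotal step: Grauert's semicontinuity/constancy theorem --- ``constant fiber dimension implies $R^qf_*\mathcal{F}$ locally free and base change is an isomorphism'' --- is only valid when the base $Y$ is \emph{reduced}. This hypothesis cannot be dropped: over a non-reduced base (say an Artinian one, a single point with nilpotents) the constancy of $y \mapsto \dim_{\mathbb{C}} H^0(X_y,\mathcal{F}_y)$ is a vacuous condition, so no argument resting on it alone can produce the nontrivial conclusion about nilpotents that the lemma asserts. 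And this is precisely the situation the lemma is designed for: in the paper it is applied to the Kuranishi family $\pi\colon X \to S$, and the Kuranishi space $S$ of a manifold with obstructed deformations is in general non-reduced. The statement of the lemma makes no reducedness assumption, so your proof, as written, only covers a special case that excludes the intended application.

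The repair is exactly the route the paper takes: replace Grauert's constancy theorem by the pointwise \emph{surjectivity} form of the cohomology and base change theorem (Bănică--Stănășilă, [2, Ch.~III, Theorem 3.4 and Corollary 3.7]), which holds over arbitrary complex spaces. You already have the needed input in hand: the constant function $1$, viewed as a global section of $f_*\mathcal{O}_X$, restricts on each fiber to a generator of $H^0(X_y,\mathcal{O}_{X_y}) = \mathbb{C}(y)$, so the base change map
$$\phi^0(y)\colon\; (f_*\mathcal{O}_X)_y \otimes_{\mathcal{O}_{Y,y}} \mathbb{C}(y) \longrightarrow H^0(X_y,\mathcal{O}_{X_y})$$
is surjective. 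The theorem then upgrades this surjection to an isomorphism near $y$, and since $\phi^{-1}(y)$ is trivially surjective, the corollary yields that $f_*\mathcal{O}_X$ is locally free of rank one at $y$ with base change an isomorphism --- no reducedness required. After this substitution, the remainder of your argument (Nakayama for surjectivity of $\alpha_y$, injectivity from the faithfully flat local homomorphism $\mathcal{O}_{Y,y} \to \mathcal{O}_{X,x}$) goes through verbatim and usefully fills in the step the paper leaves implicit.
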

\begin{proof}
For $y \in Y$, we have that $H^0(X_y,\mathcal{O}_y)=\mathbb{C}(y)$ since $X_y$ is a compact complex manifold. So, the base change morphism 
$$\phi^0(s):\; (f_*\mathcal{O}_X)_x\otimes_{\mathcal{O}_{Y,y}}\mathbb{C}(y) \rightarrow H^0(X_y,\mathcal{O}_y)= \mathbb{C}(y)
$$ is clearly surjective. By [2, Chapter III, Theorem 3.4], $\phi^0(s)$ is an isomorphism. Note that $\phi^{-1}(s)$ is trivially surjective. So, an easy application of [2, Chapter III, Corollary 3.7] gives us the freeness of the $\mathcal{O}_Y$-module $f_*\mathcal{O}_X$ in a neighborhood of $y$. As $\phi^0(y)$ is an isomorphism then $f_*\mathcal{O}_X$ is free of rank $1$ in a neighborhood of $y$. But this holds for any $y \in Y$. Thus, $f_*\mathcal{O}_X$ is locally free of rank $1$ and then the map  $\mathcal{O}_Y \rightarrow f_*\mathcal{O}_X$ turns out to be an isomorphism. This completes the proof.\end{proof}

\begin{lem} Let $G$ be a complex reductive group and let $K$ be a connected real maximal compact subgroup such that $K^{\mathbb{C}}=G$.
Let $Q$ be open subset of $G$. Let $g$ be a point in $G$ such that the $K$-orbit $K.g$ intersects every connected component of $Q$. Then if $f$ is a holomorphic function on $Q$ such that $f\mid_{K.g \cap Q}=0$ then $f\equiv 0$ on $Q$.
\end{lem}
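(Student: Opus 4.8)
The plan is to exploit the fact that, because $G=K^{\mathbb{C}}$, the orbit $K\cdot g$ is a maximally totally real real-analytic submanifold of $G$, and that a holomorphic function vanishing on such a submanifold must vanish on a full open neighborhood; the identity theorem then propagates this vanishing over each connected component of $Q$.

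First I would reduce to the case where $Q$ is connected. Since $G$ is a manifold it is locally connected, so the connected components of the open set $Q$ are themselves open, and $f\equiv 0$ on $Q$ is equivalent to $f\equiv 0$ on each component. By hypothesis $K\cdot g$ meets every component, and $Q$ being open means $(K\cdot g)\cap Q$ is relatively open in $K\cdot g$. Hence it suffices to prove the following local-to-global statement: if $Q$ is connected, $p\in (K\cdot g)\cap Q$, and $f$ is holomorphic on $Q$ and vanishes on a neighborhood of $p$ in $K\cdot g$, then $f\equiv 0$ on $Q$.

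Next I would establish that $K\cdot g$ is totally real of maximal dimension at each of its points. Because $G$ is the complexification of $K$, one has $\Lie(G)=\Lie(K)\otimes_{\mathbb{R}}\mathbb{C}=\mathfrak{k}\oplus i\mathfrak{k}$, where the complex structure $J$ on $T_eG$ satisfies $J(\mathfrak{k})=i\mathfrak{k}$; thus $\mathfrak{k}\cap J\mathfrak{k}=\{0\}$ and $\mathfrak{k}\oplus J\mathfrak{k}=T_eG$, i.e. $\mathfrak{k}$ is a maximally totally real $\mathbb{R}$-subspace. At an arbitrary point $k_0 g\in K\cdot g$ the tangent space $T_{k_0 g}(K\cdot g)$ is the image of $\mathfrak{k}$ under the differential of $k\mapsto kg$, which is a composition of left and right translations on $G$; these are biholomorphisms, so their differentials are $\mathbb{C}$-linear and preserve $J$, whence $T_{k_0 g}(K\cdot g)$ is again a maximally totally real subspace of $T_{k_0 g}G$. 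Therefore $K\cdot g$, the image of the compact real-analytic manifold $K$ under a real-analytic embedding, is a real-analytic totally real submanifold of real dimension $n:=\dim_{\mathbb{C}}G$.

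The analytic core is the local vanishing statement, which I expect to be the main (if standard) obstacle to spell out carefully. Choose holomorphic coordinates on $G$ centred at $p$, identifying a neighborhood of $p$ with a neighborhood of $0$ in $\mathbb{C}^{n}$, and parametrize $K\cdot g$ near $p$ by a real-analytic immersion $\psi\colon U\to\mathbb{C}^{n}$ with $U\subset\mathbb{R}^{n}$ open, $0\in U$, $\psi(0)=0$. Since the image is totally real of real dimension $n$, the $\mathbb{C}$-linear extension of $d\psi_{0}$ is invertible; as $\psi$ is real-analytic it extends to a holomorphic map $\Psi$ near $0$ in $\mathbb{C}^{n}$ whose differential $d\Psi_{0}$ is exactly that $\mathbb{C}$-linear extension, so $\Psi$ is a local biholomorphism. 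Then $f\circ\Psi$ is holomorphic near $0$ and vanishes on $\mathbb{R}^{n}$ near $0$; comparing Taylor coefficients shows that a holomorphic germ vanishing on the maximally totally real subspace $\mathbb{R}^{n}$ vanishes identically, so $f$ vanishes on a neighborhood of $p$ in $Q$. Finally, since $Q$ is connected and $f$ vanishes on a nonempty open subset, the identity theorem for holomorphic functions yields $f\equiv 0$ on $Q$, completing the argument.
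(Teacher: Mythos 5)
Your argument is correct, but it relates to the paper in a rather extreme way: the paper offers no argument of its own for this lemma, its entire proof being the citation ``See ([6], page 634, Identity Theorem)'', i.e.\ it delegates the statement to Heinzner's Identity Theorem. What you have written is, in effect, a self-contained proof of the cited fact in the case needed here. Each step checks out: since $K^{\mathbb{C}}=G$, the decomposition $\mathfrak{g}=\mathfrak{k}\oplus i\mathfrak{k}$ makes $\mathfrak{k}$ maximally totally real in $\mathfrak{g}$, and because left and right translations of the complex Lie group $G$ are biholomorphisms, every tangent space of the compact real-analytic submanifold $K\cdot g=R_g(K)$ is again maximally totally real; the $\mathbb{C}$-linear extension of $d\psi_0$ is invertible precisely because $\mathrm{im}\,d\psi_0\cap i\,\mathrm{im}\,d\psi_0=\{0\}$, so the complexified parametrization $\Psi$ is a local biholomorphism; a holomorphic germ on $\mathbb{C}^n$ vanishing on $\mathbb{R}^n$ near $0$ has all Taylor coefficients zero; and the reduction to connected components is legitimate since components of an open subset of a manifold are open and the hypothesis guarantees $K\cdot g$ meets each one. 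The trade-off is clear: your route makes the lemma self-contained at the cost of a page of standard totally-real analysis, while the paper's citation is shorter and points to a statement established in greater generality (Heinzner's setting of Stein spaces); mathematically both rest on the same classical fact that $K$-orbits in $K^{\mathbb{C}}$ are sets of uniqueness for holomorphic functions.
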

\begin{proof}
See ([6], page 634, Identity Theorem).
\end{proof}
Now, we are ready to state the second main result of this paper.
\begin{thm}
 Let $X/S$ be the Kuranishi family of a complex compact manifold $X_0$ with a holomorphic action of a complex reductive Lie group $G$. Then we can provide holomorphic local $G$-actions on $X/S$ extending the holomorphic $G$-action on $X_0$. 
\end{thm}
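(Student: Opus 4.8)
The plan is to reduce to the compact case already settled in Section 4 and then to complexify, transporting the resulting $K$-equivariant structure from a maximal compact subgroup to all of $G$ by means of the two bijections of Theorem 5.1 together with the identity principle of Lemma 5.2. First I would fix a connected maximal compact subgroup $K\subset G$ with $K^{\mathbb{C}}=G$ and restrict the given holomorphic $G$-action on $X_0$ to a real analytic $K$-action. By Corollary 4.1 (equivalently, Theorem 4.1 followed by Theorem 3.2) the Kuranishi family $\pi\colon X\rightarrow S$ admits a $K$-equivariant structure: real analytic $K$-actions on $X$ and on $S$ making $\pi$ equivariant and extending the $K$-action on $X_0$; since the equivariant family built in Section 4 is itself semi-universal, there is no loss in taking it to be the Kuranishi family of the statement. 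I would record two features of that construction: the $K$-action on $S\subset \mathcal{H}^{0,1}=H^1(X_0,\Theta)$ is the restriction of the $\mathbb{C}$-linear $K$-representation on $H^1(X_0,\Theta)$, so in particular it fixes $0$; and the $K$-action on $X$ is by biholomorphisms.

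Next I would complexify. The $K$-action on $X$, being by biholomorphisms, corresponds under Theorem 5.1 to a homomorphism $\Lie(K)\rightarrow \mathcal{T}_X(X)$; its $\mathbb{C}$-linear extension is a complex Lie algebra homomorphism $\Lie(G)=\Lie(K)^{\mathbb{C}}\rightarrow \mathcal{T}_X(X)$, which by the holomorphic half of Theorem 5.1 (i.e. by Corollary 5.1) integrates to a holomorphic local $(G,K)$-action on $X$ extending the global $K$-action. On the base the complexification is more explicit: the $\mathbb{C}$-linear $K$-representation on $H^1(X_0,\Theta)$ complexifies to the holomorphic $G$-representation on $H^1(X_0,\Theta)$, whose restriction to $S$ (wherever it keeps points inside $S$) is a holomorphic local $(G,K)$-action on $S$ fixing $0$. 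It is here that Lemma 5.1 is used: for $g$ near $\mathbf{1}_G$ the local action on $X$ is defined on a neighbourhood of the compact fibre $\pi^{-1}(s)$, and the isomorphism $\mathcal{O}_S\cong \pi_*\mathcal{O}_X$ forces $x\mapsto \pi(g\cdot x)$ to be constant along that connected compact fibre, so that $g\cdot s:=\pi(g\cdot x)$ is well defined; this identifies the base action with the one induced from $X$ and makes $\pi$ equivariant for the two local $G$-actions by construction.

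Finally I would check that the local $G$-action on $X$ genuinely extends the original holomorphic $G$-action $\alpha$ on $X_0$, and this is where Lemma 5.2 does the work. Since the $G$-action on $S$ fixes $0$, the central fibre $X_0=\pi^{-1}(0)$ is locally $G$-invariant (because $\pi(g\cdot x)=g\cdot\pi(x)=g\cdot 0=0$ for $x\in X_0$). Restricting the local $X$-action to $X_0$ and comparing it with $\alpha$, I would fix $p\in X_0$ and note that $g\mapsto g\cdot p$ and $g\mapsto \alpha(g)(p)$ are both holomorphic in $g$ near $\mathbf{1}_G$ and agree for $g\in K$, since by Corollary 5.1 the restriction of the local $(G,K)$-action to $K$ is exactly the global $K$-action, which on $X_0$ coincides with $\alpha|_K$. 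As $K=K\cdot\mathbf{1}_G$ meets every connected component of a small connected neighbourhood of $\mathbf{1}_G$, Lemma 5.2 applied in local coordinates of $X_0$ forces the two maps to coincide near $\mathbf{1}_G$, which is precisely the asserted extension.

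The main obstacle I anticipate is not any single computation but the bookkeeping intrinsic to local actions: one must organise the domains $U_\pi\Subset V_\pi$ so that all the comparison maps above are simultaneously holomorphic on one fixed connected neighbourhood of $\mathbf{1}_G$ whose intersection with $K$ meets every component, so that Lemma 5.2 is legitimately applicable, and one must verify that the fibre-constancy descent of Lemma 5.1 is compatible with the only locally defined biholomorphisms $\Phi_\pi(g)$ rather than with global automorphisms of $X$.
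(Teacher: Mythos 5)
Your overall architecture coincides with the paper's: reduce to a maximal compact subgroup $K$ via Corollary 4.1, complexify the $K$-action on the total space $X$ via Corollary 5.1, descend it to the base using Lemma 5.1, and compare with the given data by the identity theorem (Lemma 5.2). Your closing argument on the central fibre --- comparing $g\mapsto g\cdot p$ with $g\mapsto \alpha(g)(p)$ pointwise via Lemma 5.2 --- is a legitimate, slightly more hands-on variant of the paper's remark that a holomorphic $G$-action on a compact complex manifold is determined by its restriction to $K$.

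There is, however, one genuine gap, and it sits exactly where you write ``by construction'': the base action. You define the local $G$-action on $S$ as the restriction to $S$ of the complexified linear $G$-representation on $H^1(X_0,\Theta)$, and then assert that Lemma 5.1 identifies this with the action descended from $X$, making $\pi$ equivariant ``by construction''. Neither half of this is automatic. First, $S$ is only known to be $K$-invariant (Lemma 4.3); it is not definitional --- and needs proof --- that the linear action of elements $g\in G\setminus K$ near $K$ maps points of $S$ back into $S$, and your parenthetical ``wherever it keeps points inside $S$'' does not yield a local action \emph{on} $S$ unless such local invariance is established. Second, even granting that, the descended map $\nu_g$ produced via Lemma 5.1 (characterized by $\pi(g\cdot x)=\nu_g(\pi(x))$) and the linear map restricted to $S$ are two a priori different holomorphic families of maps that agree only for $g\in K$; their equality is itself a Lemma 5.2 statement (both are holomorphic in $g$, agree on $K$, apply the identity theorem to coordinate functions), not a tautology. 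This matters because your entire central-fibre step rests on the base action fixing $0$, which you deduce precisely from the linear description. The paper sidesteps all of this by never claiming the base action is linear: it takes the descended action $\nu_g$ as the definition of the action on $S$ (so that equivariance of $\pi$ really is by construction) and proves $\nu_g(0)=0$ by applying Lemma 5.2 to the coordinate functions of $g\mapsto \nu_g(0)$, which vanish on $K$. Your proof is repaired the same way: drop the linear description, adopt the descended action as the base action, and run your Lemma 5.2 argument once more to show it fixes the reference point.
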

\begin{proof}
Let $K$ be a connected real maximal compact subgroup whose complexification is exactly $G$. By Corollary 4.1, we obtain a $K$-equivariant Kuranishi family $\pi:$ $X\rightarrow S$. If we can extend the $K$-actions on $X$ and on $S$ to holomorphic local $(G,K)$-actions such that $\pi$ is $G$-equivariant with respect to these holomorphic local $(G,K)$-actions then our result follows immediately since any local $(G,K)$-action is obviously a local $G$-action. By Corollary 5.1, we obtain a holomorphic local $(G,K)$-action on $X$. Note that the restriction on $K$ of this local $(G,K)$-action is nothing but the initial global $K$-action on $X$. 

Let $g\in N(K)\setminus K$ where $N(K)$ is a neighborhood of $K$. We shall prove that $g$, as a biholomorphism on $X$, swaps fibers of $\pi$. Indeed, recall that by construction, $S$ is an analytic subset defined in a open subset $U\subset \mathbb{C}^n$ where $n:=\dim_{\mathbb{C}}H^1(X_0,\Theta)$. Consider the following holomorphic function 
$$\rho_i:\; X\overset{g}{\rightarrow}X\overset{\pi}{\rightarrow}S\overset{\iota}{\rightarrow}\mathbb{C}^n\overset{\pi_i}{\rightarrow}\mathbb{C}$$ where $\iota$ is the inclusion and $\pi_i$ is the $i^{\text{th}}$-projection.  Lemma 5.1 tells us that $\pi_*\mathcal{O}_X=\mathcal{O}_S$ which means precisely that any holomorphic function from $X$ to $\mathbb{C}$ factors through $\pi$. So, for each $i$, there exists a holomorphic function $\sigma_i:\; S\rightarrow \mathbb{C}$ such that $\rho_i=\sigma_i\circ \pi$. So, $\sigma_i$'s together form a holomorphic function $\sigma: \; S \rightarrow \mathbb{C}^n$ which then is lifted to a holomorphic function $\nu_g:$ $S\rightarrow S$. More precisely, we have the following commutative diagram
\begin{center}
\begin{tikzpicture}[every node/.style={midway}]
  \matrix[column sep={8em,between origins}, row sep={3em}] at (0,0) {
    \node(Y){$X$}   ; & \node(X) {$X$} ;   \\
    \node(M) {$S$}; & \node (N) {$S$};\\
    \node(P) { }; & \node (Q) {$\mathbb{C}^n$};\\
  };
  
  \draw[->] (Y) -- (M) node[anchor=east]  {$\pi$}  ;
  \draw[->] (Y) -- (X) node[anchor=south]  {$g$};
  \draw[->] (X) -- (N) node[anchor=west] {$\pi$};
  \draw[->] (N) -- (Q) node[anchor=west] {$\iota$};
  \draw[->] (M) -- (Q) node[anchor=north] {$\sigma$};
  \draw[->] (M) -- (N) node[anchor=south] {$\nu_g$};,
\end{tikzpicture}
\end{center}
which means in particular that $g$ exchanges fibers of $\pi$. Since $g$ is a biholomorphism then so is $\nu_g$. On one hand, $\nu_g$ is uniquely determined by $g$. This follows from the fact that $X$ is constructed from $\underline{X}\times S$, as the underlying differentiable manifold, and the fact that  $g$ swaps fibers of $\pi$. On the other hand, since the local $(G,K)$-action on $X$ is holomorphic then $\nu_g(-)$ varies holomorphically with respect to the variable $g$. Hence, the map $g \mapsto \nu_g$ defines a holomorphic local $(G,K)$-action on $S$, which extends the initial $K$-action on $S$.

Finally, we shall prove that the restriction of the holomorphic local $(G,K)$-action of $X$ on the central fiber $X_0$ is the initial $G$-action on $X_0$. In order to do it, we first show that the holomorphic local $(G,K)$-action on $S$ fixes the reference point $0$. Let $N(K)$ be a connected open neighborhood of $K$. Note that the holomorphic function  \begin{align*}
\chi :G&\rightarrow (S,0)\\
g &\mapsto \nu_g(0)
\end{align*} is constant on $K$, i.e. $\chi(k)=0$ for all $k \in K$. Consider the holomorphic function 
$$\mu_i:\; G\overset{\chi}{\rightarrow}(S,0)\overset{\iota}{\rightarrow}(\mathbb{C}^n,0)\overset{\pi_i}{\rightarrow}\mathbb{C}$$ where $\iota$ is the inclusion and $\pi_i$ is the $i^{\text{th}}$-projection. Hence, we also have $\mu_i(k)=0$ for all $k \in K$. Applying Lemma 5.2 with $g=\mathbf{1}_G$ and $Q=N(K)$, we obtain that $\mu_i$ is zero on $N(K)$. But this holds for any $i$ and so $\chi(g)=0$ for all $g \in N(K)$. This justifies the claim. Therefore, the local $(G,K)$-action on $X$ preserves the central fiber $X_0$, i.e. $gX_0 \subset X_0$ for $g\in G$ whenever it is defined. Consequently, we have a holomorphic local $(G,K)$-action on $X_0$, which is the restriction on $X_0$ of the one on $X$. Because $X_0$ is compact then this action turns out to be global and it contains the initial $K$-action on $X_0$. As a matter of fact, it must coincide with the initial $G$-action on $X_0$ because the action of $G$ on a complex compact manifold is uniquely determined by the one of $K$. 

In summary, what we have just done is to equip holomorphic local $(G,K)$-actions on $X$ and on $S$ in a way that the map $\pi:$ $X \rightarrow S$ is $G$-equivariant with respect two these holomorphic local $(G,K)$-actions and that the restriction on the central fiber $X_0$ of the holomorphic local $(G,K)$-action on $X$  is nothing but the initial holomorphic $G$-action on $X_0$. This finishes the proof. 
\end{proof}

\bibliographystyle{amsplain}

\end{document}